\def\rth{\mathbb{R}^3}
\def\R{\mathbb{R}}
\def\oB{\ov{\mathbb{B}}}
\def\B{\mathbb{B}}
\def\S{\Sigma}
\def\cC{\mathcal{C}}
\newcommand{\nc}{\newcommand}
\newcommand{\ben}{\begin{enumerate}}
\newcommand{\bit}{\begin{itemize}}
\newcommand{\een}{\end{enumerate}}
\newcommand{\eit}{\end{itemize}}
\newcommand{\wh}{\widehat}
\newcommand{\Int}{\mbox{\rm Int}}
\newcommand{\inj}{\mbox{\rm Inj}}
\newcommand{\cD}{\mathcal{D}}
\newcommand{\ov}{\overline}
\newcommand{\wt}{\widetilde}
\newcommand{\bp}{\begin{proof}}
\newcommand{\ep}{\end{proof}}
\def\a{{\alpha}}
\def\G{{\Gamma}}
\def\de{{\delta}}
\def\ve{{\varepsilon}}
\newtheorem{theorem}{Theorem}[section]
\newtheorem{lemma}[theorem]{Lemma}
\newtheorem{proposition}[theorem]{Proposition}
\newtheorem{remark}[theorem]{Remark}
\newtheorem{corollary}[theorem]{Corollary}
\newtheorem{definition}[theorem]{Definition}
\newtheorem{claim}[theorem]{Claim}
\newcommand{\ed}{\end{document}}
\nc{\bl}{\begin{lemma} }
\nc{\el}{\end{lemma} }
\nc{\bt}{\begin{theorem} }
\nc{\et}{\end{theorem} }
\newcommand{\rc}{ \renewcommand }
\rc{\v}{    \overset{\longrightarrow} }
\nc{\Si}{\Sigma}
\begin{document}

\begin{title}{Chord arc properties for constant mean curvature disks}
\end{title}
\begin{author}
{William H. Meeks III\thanks{This material is based upon
   work for the NSF under Award No. DMS-1309236.
   Any opinions, findings, and conclusions or recommendations
   expressed in this publication are those of the authors and do not
   necessarily reflect the views of the NSF.}
   \and Giuseppe Tinaglia\thanks{The second author was partially
   supported by
EPSRC grant no. EP/M024512/1}}
\end{author}
\date{}
\maketitle
\vspace{-.3cm}

\begin{abstract}
We prove  a chord arc type bound for
disks  embedded in $\rth$ with constant mean curvature that
does not depend on the value of the  mean curvature. This bound is inspired by and
generalizes the weak chord arc bound of Colding and Minicozzi
in Proposition 2.1 of~\cite{cm35} for
embedded minimal disks.
Like in the minimal case, this chord arc bound is a fundamental tool
for studying complete constant mean curvature surfaces embedded  in $\rth$ with
finite topology or with positive injectivity radius.
\end{abstract}

\vspace{.3cm}

\noindent{\it Mathematics Subject Classification:} Primary 53A10,
   Secondary 49Q05, 53C42

\noindent{\it Key words and phrases:}
Minimal surface,
constant mean curvature, minimal lamination,
 positive injectivity radius,
curvature estimates, one-sided curvature estimate, chord arc.

\section{Introduction.}
In this paper we apply  results in~\cite{cm35,mr13,mt7,mt13,mt9} to
derive a chord arc bound for compact disks embedded
in $\rth$ with constant mean curvature. For clarity of  exposition,
we will call an oriented surface
$\Sigma$ immersed in $\rth$ an {\it $H$-surface} if it
is {\it embedded}, {\em connected}  and it
has {\it non-negative constant mean curvature $H$}. We will  call an
$H$-surface an {\em $H$-disk} if the $H$-surface is homeomorphic
to a closed unit disk in the Euclidean plane; in general we will
allow an $H$-disk $\Sigma$ to be non-smooth along its boundary.
We remark that this definition of $H$-surface
agrees with the one given in~\cite{mt13,mt9},
but differs from the one given in~\cite{mt7}
where we restrict to the case when $H>0$.

It will be important to distinguish between intrinsic and
extrinsic balls centered at points of  $\Sigma $; given $p\in
\Sigma$ and $R>0$, we will denote by $B_{\Sigma }(p,R)$ (resp.
$\B(p,R)$)  the {\em open} intrinsic (resp. open extrinsic) ball of center
$p$ and radius $R$ and let $\B(R)=\B(\vec{0},R)$, where
$\vec{0}$ is the origin in $\rth$. We will denote by $\ov{B}_{\Sigma }(p,R)$ (resp.
$\oB(p,R)$)  the {\em closed} intrinsic (resp. closed extrinsic) ball of center
$p$ and radius $R$  and let $\oB(R)$ be the closure of $\B(\vec{0},R)$.

\begin{definition} \label{def1.1} {\rm Given a point $p$ on a surface
$\Sigma\subset \rth$, $\Si (p,R)$ denotes
 the closure of the component of
 $\Sigma \cap \B(p,R)$ passing through $p$.}
\end{definition}

We note that if the surface $\Sigma$ in the above definition is
transverse to $\partial \B(p,R)$,
then  $\Si (p,R)$ is  the component of $\Sigma \cap \ov{\B}(p,R)$ passing
through $p$. The main result of this paper is the following theorem.

\begin{theorem}[Weak chord arc property for $H$-disks] \label{thm1.1}
There exists a $\delta_1 \in (0,
\frac{1}{2})$  such that the following holds.

Let $\Si$ be an   $H$-disk in $\rth.$  Then for all
intrinsic closed balls $\ov{B}_\Si(x,R)$ in $\Si-
\partial \Si$: \ben \item $\Si (x,\delta_1 R)$ is a disk with piecewise smooth boundary
$\partial \Sigma(x,\delta_1 R)\subset \partial \B(x,\de_1R)$.
 \item
$
 \Si (x, \delta_1 R) \subset B_\Si (x, \frac{R}{2}).$
\een
\end{theorem}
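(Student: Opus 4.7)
The plan is to argue by contradiction, following the blueprint of Colding--Minicozzi's proof of Proposition~2.1 in~\cite{cm35} for embedded minimal disks, but replacing their one-sided curvature estimate with its $H$-independent generalization for $H$-disks proved in~\cite{mt7,mt9}. Suppose no universal $\delta_1 \in (0, \frac{1}{2})$ works. Then I can find sequences $\delta_n \to 0$, $H_n$-disks $\Si_n$, points $x_n \in \Si_n$, and radii $R_n>0$ with $\ov B_{\Si_n}(x_n, R_n) \cap \partial \Si_n = \emptyset$, for which either $\Si_n(x_n, \delta_n R_n)$ fails to be a topological disk with piecewise smooth boundary in $\partial \B(x_n, \delta_n R_n)$, or $\Si_n(x_n, \delta_n R_n) \not\subset B_{\Si_n}(x_n, R_n/2)$. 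After translating $x_n$ to $\vec 0$ and rescaling so $R_n = 1$, pass to a subsequence so that $H_n \to H_\infty \in [0, \infty]$.

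The next step is to take a smooth limit of the $\Si_n$ on compact subsets of $\B(\vec 0, 1/2)$. When $H_\infty < \infty$, the curvature estimates for $H$-disks in~\cite{mt7,mt9}, which are independent of $H$ provided the intrinsic distance to the boundary is uniformly bounded below (here by $1$), yield uniform second-fundamental-form bounds, so a subsequence converges smoothly to an $H_\infty$-surface $\Si_\infty$ through $\vec 0$. The structure results from~\cite{mr13,mt9}, together with the fact that each $\Si_n$ is a disk and the convergence is of multiplicity one near $\vec 0$, force the component of $\Si_\infty$ through $\vec 0$ to be locally a topological disk. The contradiction is then extracted by transferring the failure of (1) or (2) at scale $\delta_n \to 0$ to the limit: either two distinct sheets of $\Si_\infty$ pass through $\vec 0$ at arbitrarily small extrinsic distance but with intrinsic distance at least $1/2$, contradicting the one-sheeted graphical structure of a smooth embedded disk near a regular point, or the component through $\vec 0$ at small scale has non-disk topology, contradicting smooth convergence. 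The piecewise-smooth boundary condition $\partial \Si(x, \delta_1 R) \subset \partial \B(x, \delta_1 R)$ is a transversality statement obtained for almost every $\delta_1$ via Sard's theorem, adjusting the universal constant slightly if needed.

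The main obstacle is the case $H_n \to \infty$, where the $\Si_n$ can degenerate in subtle ways even though small round spheres of radius $1/H_n \to 0$ are excluded by the intrinsic radius bound. I would handle this by a further homothetic rescaling with factor $\delta_n^{-1}$, bringing the bad scale to unit size; the new mean curvatures are $H_n \delta_n$ and the new intrinsic distance to the boundary blows up to infinity. One then splits into the subcases $H_n \delta_n \to 0$, $H_n \delta_n \to c > 0$, and $H_n \delta_n \to \infty$, invoking in each the appropriate compactness and lamination results from~\cite{cm35,mt7,mt9} (the Colding--Minicozzi lamination theory for minimal disks in the first case, the smooth $c$-surface compactness in the second, and the blow-up analysis for $H$-disks in the third) to produce the desired contradiction and complete the argument.
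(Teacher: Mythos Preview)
Your proposal has a genuine gap at the step where you invoke ``the curvature estimates for $H$-disks in~\cite{mt7,mt9}, which are independent of $H$ provided the intrinsic distance to the boundary is uniformly bounded below.'' No such estimate exists. Rescaled helicoids give embedded minimal disks whose intrinsic distance from the center to the boundary is fixed while $|A|$ at the center blows up; the same phenomenon occurs for $H$-disks. The one-sided curvature estimate (Theorem~\ref{th}) requires a half-space barrier, and the three-disk estimate (Corollary~\ref{cest}) requires pairwise disjoint disks with extrinsic boundary control---neither hypothesis is available from your setup. Consequently you cannot pass to a smooth limit on compact subsets of $\B(\vec 0,1/2)$, and the argument collapses at that point.

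There is a more structural reason the direct blow-up cannot succeed. After your rescaling, the only information you retain is that the \emph{intrinsic} distance from $\vec 0$ to $\partial\Si_n$ is at least $1$ (or $1/\delta_n$ after the second rescaling). But every compactness tool available here---the limit lamination theorem (Theorem~\ref{thm2.1}) and the curvature estimates---requires \emph{extrinsic} boundary control of the form $\partial M_n\subset \rth-\B(n)$. Intrinsic distance to $\partial\Si_n$ gives no such control: the boundary could sit extrinsically very close to $\vec 0$. This is precisely why the paper does not argue directly. It first proves the result under the stronger extrinsic hypothesis $\partial\Si\subset\partial\B(p,R)$ (Proposition~\ref{cm2.1}), then introduces the function $a_\delta(z)=d_\Si(z,\partial\Si)/R(z,\delta)$ and a smallest-scale/point-picking argument (Lemma~\ref{cm3.39}) to manufacture, at a near-maximizer of $a_\delta$, a large intrinsic ball in which \emph{every} sub-ball of a fixed radius is already $\delta_2$ weakly chord arc. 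That uniform small-scale control is what produces the three pairwise disjoint disks with extrinsic boundary in $\rth-\B(\cdot)$ needed to feed into Corollary~\ref{cest}, and the scale-expansion Proposition~\ref{propcm3.4} then yields the contradiction. Your sketch skips this entire mechanism, and without it the extrinsic hypotheses of the cited results are never met.
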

Theorem~\ref{thm1.1} gives rise to a more standard chord arc type
result that closely resembles the chord arc type result for
0-disks given by Colding and Minicozzi in
Theorem~0.5 in~\cite{cm35}; see
Theorem~1.2 in~\cite{mt13} for this application.

We clarify that Theorem~\ref{thm1.1} in this manuscript depends on the
extrinsic one-sided curvature estimate for $H$-disks, Theorem~1.1 in~\cite{mt9}. 
On the other hand, the intrinsic one-sided curvature estimate for $H$-disks, 
Theorem~4.5 in~\cite{mt9}, relies on Theorem~\ref{thm1.1} in this manuscript.

Other applications of the results in this manuscript can be found in~\cite{mt15}.
\vspace{.2cm}

\vspace{.2cm}

\noindent  {\sc Acknowledgements:} The authors would
like to thank Joaquin Perez  for making the figures that appear in this
paper.

\section{Proof of  Theorem~\ref{thm1.1}.}
The proof of Theorem~\ref{thm1.1} relies on three results that appear
in~\cite{mt13,mt9}, and for the sake of completeness,
we include their statements here. Theorems~\ref{th} and~\ref{thm2.1} are
generalizations of results that were proved by Colding and Minicozzi in the
minimal case in~\cite{cm23}.

\begin{theorem}[One-sided curvature estimate for $H$-disks, Theorem~1.1
in~\cite{mt9}] \label{th} There exist $\ve\in(0,\frac{1}{2})$
and $C >0$ such that for any $R>0$, the following holds.
Let $\cD$ be an $H$-disk such that $$\cD\cap \B(R)\cap\{x_3=0\}
=\O \quad \mbox{and} \quad \partial \cD\cap \B(R)\cap\{x_3>0\}=\O.$$
Then:
\begin{equation} \label{eq1}
\sup _{x\in \cD\cap \B(\ve R)\cap\{x_3>0\}} |A_{\cD}|(x)\leq \frac{C}{R},
\end{equation} where $|A_{\cD}|$ denotes the norm of the second fundamental form
of ${\cD}$. In particular, if
$\cD\cap \B(\ve R)\cap\{x_3>0\}\neq\mbox{\rm \O}$, then $H< \frac{C}{R}$.
\end{theorem}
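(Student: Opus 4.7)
My plan is to argue by contradiction, following the blueprint of Colding and Minicozzi's one-sided curvature estimate in the minimal case (proved in \cite{cm23} via their multi-valued graph machinery), with modifications to accommodate positive and possibly unbounded mean curvature. By the scale invariance $\mathcal{D}\mapsto \lambda\mathcal{D}$, $H\mapsto H/\lambda$ of both the hypothesis and the conclusion, it suffices to produce uniform constants $\varepsilon$ and $C$ for $R=1$. Suppose no such pair exists at $R=1$: then there is a sequence of $H_n$-disks $\mathcal{D}_n$ satisfying the hypotheses on $\B(1)$, together with points $p_n\in \mathcal{D}_n\cap \B(1/n)\cap \{x_3>0\}$ at which $|A_{\mathcal{D}_n}|(p_n)\to\infty$. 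After a standard Ecker--Huisken style point-picking, I can replace $p_n$ by a nearby near-maximum of $|A|$; setting $\lambda_n=|A_{\mathcal{D}_n}|(p_n)$ and rescaling, $\widetilde{\mathcal{D}}_n:=\lambda_n(\mathcal{D}_n-p_n)$ becomes an embedded $\widetilde H_n$-disk with $\widetilde H_n=H_n/\lambda_n$, $|A_{\widetilde{\mathcal{D}}_n}|(0)=1$, and uniformly bounded second fundamental form on every fixed extrinsic ball.

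Along a subsequence, $\widetilde H_n\to H_\infty\in[0,\infty)$ (the degenerate case $H_n/\lambda_n\to\infty$ is handled by rescaling instead by $H_n$, reducing to a sequence of embedded $1$-disks in a halfspace). The rescaled disks lie in $\{x_3>-\lambda_n(p_n)_3\}$, which along a further subsequence converges to a fixed closed halfspace $\{x_3\ge h\}$ with $h\in[-\infty,0]$. Standard bounded-curvature compactness for embedded CMC surfaces then produces a properly embedded $H_\infty$-surface — or, more generally, a CMC lamination — $\mathcal{D}_\infty$ through the origin with $|A_{\mathcal{D}_\infty}|(0)=1$ and contained in $\{x_3\ge h\}$. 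To close the contradiction I would invoke halfspace rigidity: when $H_\infty=0$, the halfspace theorem of Hoffman and Meeks, together with the minimal lamination structure theorems, forces $\mathcal{D}_\infty$ to be a plane; when $H_\infty>0$, the halfspace theorem for $H$-surfaces, combined with the classification of properly embedded $H$-surfaces in halfspaces from \cite{mt7,mr13}, again forces $\mathcal{D}_\infty$ to be a plane. Either outcome contradicts $|A_{\mathcal{D}_\infty}|(0)=1$.

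The hardest step is ensuring that the blow-up limit actually feels the halfspace constraint: one must select $p_n$ so that $\mathcal{D}_\infty$ makes tangential contact with the plane $\{x_3=h\}$, which requires a careful coordinated choice of the blow-up point and of the subsequence governing $\lambda_n(p_n)_3$ (in particular ruling out the degenerate case $h=-\infty$). This is the CMC analogue of Colding and Minicozzi's multi-valued graph analysis, in which a blow-up at a curvature singularity produces a minimal multi-valued graph whose winding must be controlled in order to detect the halfspace; here one needs the analogous embedded $H$-disk multi-valued graph theory to drive the argument. A secondary technicality is to rule out nontrivial CMC limit laminations with leaf accumulations, which I would address by a tangent-leaf analysis that feeds the minimal one-sided curvature estimate of \cite{cm23} back into the local structure of the limit, reducing matters to the single-surface case treated above.
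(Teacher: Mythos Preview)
The paper does not prove this statement; it merely quotes Theorem~1.1 of \cite{mt9} as one of three imported results on which the proof of Theorem~\ref{thm1.1} relies. So there is no ``paper's own proof'' to compare against here, and your task was really to reconstruct the argument of \cite{mt9}.

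As to your outline itself: you have correctly identified the shape of a blow-up/contradiction scheme, but the step you flag as ``the hardest'' is in fact a genuine gap, not a technicality. After point-picking and rescaling by $\lambda_n=|A_{\mathcal D_n}|(p_n)$, the plane $\{x_3=0\}$ is carried to $\{x_3=-\lambda_n\,x_3(p_n)\}$, and nothing in your setup prevents $\lambda_n\,x_3(p_n)\to\infty$. In that case the limit $\mathcal D_\infty$ sits in all of $\mathbb R^3$, the halfspace theorems give you nothing, and there is no contradiction. You acknowledge this and then defer to ``the analogous embedded $H$-disk multi-valued graph theory'' --- but that theory is precisely the content of \cite{mt9}, so at this point the argument is circular rather than incomplete in a minor way. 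The Colding--Minicozzi proof in the minimal case does not succeed by a naive blow-up either; it needs the structural dichotomy (multi-valued graphs forming near points of large curvature, chord-arc bounds, limit lamination theorems) to force the halfspace constraint to persist at the correct scale. Your sketch uses the conclusion of that machinery without supplying it.

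There is a second soft spot: the passage from ``bounded second fundamental form on compact sets'' to ``properly embedded $H_\infty$-surface'' is not automatic. Without area or injectivity-radius control the limit is a priori only a lamination, and your proposed tangent-leaf reduction again presupposes the one-sided estimate you are trying to prove (you say you would feed \cite{cm23} back in, but for $H>0$ leaves that estimate is not available). In short, the outline captures the right intuition but the two places where real work is needed---keeping the barrier plane at finite distance under blow-up, and controlling the limit object---are exactly where the substantive arguments of \cite{mt9} live, and those are not supplied here.
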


\begin{theorem}[Limit lamination theorem for $H$-disks,
Theorem~1.1 in~\cite{mt13}] \label{thm2.1}
Fix  $\ve >0$ and let $\{M_n\}_n$ be
a sequence of $H_n$-disks in $\rth$ containing the origin  and such that
$\partial M_n \subset [\rth - \B(n)]$ and
$|A_{M_n} |(\vec{0})\geq \ve$. Then, after replacing  by
some subsequence,  exactly one of the
following two statements hold.
\ben[A.]
\item The surfaces $M_n$ 
converge smoothly with multiplicity one or two on compact
subsets of $\rth$ to a helicoid $M_{\infty}$
containing the origin. Furthermore,
every component $\Delta$ of $M_n\cap \B(1)$  is an open disk
whose closure $\ov{\Delta}$ in $M_n$
is a compact disk with piecewise smooth boundary, and
where the intrinsic distance in $M_n$
between  any two points in  $\ov{\Delta}$ is less than 10.
\item  There are points $p_n\in M_n$ such that
\[
\lim_{n\to \infty}p_n=\vec{0} \text{\, and \,}
\lim_{n\to \infty}|A_{M_n}|(p_n)=\infty,
\] and the
following  hold: \ben
\item The surfaces $M_n$ converge to a foliation
of $\rth$ by planes and the convergence is $C^\alpha$, for any $\a\in(0,1)$,
away from the line containing
the origin and orthogonal to the planes in the foliation. \item
There exists  compact subdomains
$\cC_n$ of $M_n$, $[M_n\cap \oB(1)]\subset \cC_n \subset \B(2)$
and  $\partial \cC_n\subset \B(2)-\oB(1)$, each $\cC_n$
consisting of one or two pairwise disjoint disks, where
each disk component has intrinsic diameter less than 3 and intersects $\B(1/n)$.
Moreover, each connected
component of $M_n\cap \B(1)$ is an open disk whose closure in $M_n$
is a compact disk with piecewise smooth boundary. \een
\een
\end{theorem}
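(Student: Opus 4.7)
The plan is a proof by contradiction exploiting the limit lamination theorem (Theorem~\ref{thm2.1}) for $H$-disks. Assume no universal $\delta_1$ works. Then I can extract a sequence of $H_n$-disks $\Sigma_n$, points $x_n \in \Sigma_n - \partial\Sigma_n$, and radii $R_n > 0$ with $\ov{B}_{\Sigma_n}(x_n,R_n) \subset \Sigma_n - \partial\Sigma_n$, such that for $\delta_n = 1/n$ either conclusion~(1) or conclusion~(2) fails at extrinsic scale $\delta_n R_n$. After translating $x_n$ to the origin and rescaling by $1/(\delta_n R_n)$, the bad extrinsic scale becomes $1$, the intrinsic ball $\ov{B}_{\Sigma_n}(\vec 0, 1/\delta_n) \subset \Sigma_n - \partial\Sigma_n$ has radius tending to infinity, and the failure becomes either a topological/boundary obstruction for the component $\Sigma_n(\vec 0,1)$ at extrinsic scale $1$ or the existence of a point $y_n \in \Sigma_n(\vec 0,1)$ with $d_{\Sigma_n}(\vec 0, y_n) > 1/(2\delta_n) \to \infty$.

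The next task is to satisfy the two hypotheses of Theorem~\ref{thm2.1}: $\partial \Sigma_n \subset \rth - \B(n)$ and $|A_{\Sigma_n}|(\vec 0) \geq \varepsilon$. The intrinsic control supplied by the hypothesis of Theorem~\ref{thm1.1} does not by itself force $\partial \Sigma_n$ to be extrinsically far from $\vec 0$, so I instead apply Theorem~\ref{thm2.1} to the component through $\vec 0$ of $\Sigma_n \cap \B(r_n)$ for a suitable extrinsic radius $r_n \to \infty$, using the one-sided curvature estimate of Theorem~\ref{th} to control the structure of this component and to inherit the $H$-disk property with boundary on $\partial \B(r_n)$. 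For the curvature condition, a standard blow-up selection produces either uniformly bounded curvature on a fixed neighborhood of $\vec 0$—in which case smooth subsequential convergence to a smooth $H_\infty$-surface gives graphicality at $\vec 0$ and hence both chord arc conclusions at every sufficiently small scale, contradicting the failure—or a new basepoint $p_n \to \vec 0$ with $|A_{\Sigma_n}|(p_n) \to \infty$, around which rescaling by $|A_{\Sigma_n}|(p_n)$ normalizes the curvature at the new origin to $1$ while preserving the failure at a bounded extrinsic scale.

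Granted these reductions, Theorem~\ref{thm2.1} applies and yields the dichotomy. In Case~A (helicoid limit), every component of $M_n \cap \B(1)$ is an open disk whose closure in $M_n$ is a compact disk with piecewise smooth boundary of intrinsic diameter less than $10$; applied to the component through $\vec 0$, this gives the piecewise smooth disk structure required by~(1) and, since $10 < 1/(2\delta_n)$ for large $n$, also the intrinsic containment required by~(2), contradicting the failure. In Case~B (foliation-by-planes limit), the compact subdomains $\cC_n$ containing $M_n \cap \oB(1)$ consist of at most two pairwise disjoint piecewise smooth disks of intrinsic diameter less than $3$; the one through $\vec 0$ contains $\Sigma_n(\vec 0, 1)$ and again forces both~(1) and~(2) at scale $1$, contradicting failure. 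The principal technical obstacle is the reduction step in the second paragraph: simultaneously arranging the extrinsic boundary separation (converting an intrinsic bound into an extrinsic one by passing to a carefully chosen subdisk) and the curvature lower bound at the basepoint, while preserving the quantitative failure of the conclusion across all the rescalings.
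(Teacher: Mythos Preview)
Your proposal is not a proof of the stated theorem. Theorem~\ref{thm2.1} is the \emph{limit lamination theorem for $H$-disks}; the paper does not prove it at all but merely quotes it from~\cite{mt13} as a tool. What you have written is instead a sketch of a proof of Theorem~\ref{thm1.1} (the weak chord arc property) that \emph{uses} Theorem~\ref{thm2.1}. So at the most basic level there is a mismatch: you are proving the wrong statement.

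Even read as an attempt at Theorem~\ref{thm1.1}, your outline has a genuine gap, and it is exactly the one you flag yourself. The hypothesis of Theorem~\ref{thm1.1} is purely intrinsic ($\ov{B}_\Sigma(x,R)\subset\Sigma-\partial\Sigma$), while the hypotheses of Theorem~\ref{thm2.1} are extrinsic ($\partial M_n\subset\rth-\B(n)$) together with a curvature lower bound at the basepoint. Your second paragraph proposes to manufacture both by ``passing to the component of $\Sigma_n\cap\B(r_n)$ and using Theorem~\ref{th},'' but Theorem~\ref{th} requires a separating plane, which is nowhere in sight, and there is no mechanism offered for why the relevant component should be a disk with boundary on $\partial\B(r_n)$. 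You then concede this reduction is ``the principal technical obstacle'' without resolving it. That is not a detail: it is the entire content of the theorem.

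The paper's actual proof of Theorem~\ref{thm1.1} is organized precisely to circumvent this obstacle, and it is substantially more elaborate than your sketch. First, Proposition~\ref{cm2.1} proves the chord arc conclusion under the \emph{additional} extrinsic hypothesis $\partial\Sigma\subset\partial\B(p,R)$; this is where Theorem~\ref{thm2.1} is invoked, and the extra hypothesis is exactly what makes the invocation legitimate. Second, Proposition~\ref{propcm3.4} is a scale-expansion lemma whose proof does \emph{not} use Theorem~\ref{thm2.1} but rather the three-disks curvature estimate (Corollary~\ref{cest}) and a prolongation/stability argument to reach a contradiction via a planar limit. Finally, the quantity $a_\delta$ and Lemmas~\ref{lem10}--\ref{cm3.39} run a smallest-bad-scale argument that feeds Proposition~\ref{propcm3.4}. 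Your one-step blow-up-and-apply-Theorem~\ref{thm2.1} plan collapses all of this and, as you yourself note, leaves the essential step unproved.
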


\begin{corollary} [Corollary~4.6 in \cite{mt13}]\label{cest} There exist constants $\ve\in (0,1)$, $C>1$
such that the following holds.  Let
$\Sigma_1$, $\Sigma_2$, $\Sigma_3$ be three pairwise disjoint $H_i$-disks with
$\partial \Sigma_i\subset [ \rth- \B(1)]$ \,for $i=1,2,3$.
If $\,\B(\ve)\cap\Sigma_i\not=\O$ for $i=1,2,3$, then {\large
\[
\sup_{\B(\ve)\cap\Sigma_i,\,i=1,2,3}|A_{\Sigma_i}|\leq C.
\]}\end{corollary}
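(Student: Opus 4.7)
The plan is to argue by contradiction, combining Theorem~\ref{th} with Theorem~\ref{thm2.1}. If the statement fails, then for every $n \in \N$ there exist pairwise disjoint $H_i^n$-disks $\Sigma_1^n, \Sigma_2^n, \Sigma_3^n$ with $\partial \Sigma_i^n \subset \rth \setminus \B(1)$, each meeting $\B(1/n)$, but with $\sup_{i,\, \B(1/n)\cap\Sigma_i^n} |A_{\Sigma_i^n}| > n$. After relabeling, the blow-up occurs on $\Sigma_1^n$ at a point $p_n$ with $|p_n| < 1/n$ and $|A_{\Sigma_1^n}|(p_n)\to\infty$. I would translate by $-p_n$ and dilate by a factor $\lambda_n \to \infty$ chosen so that the rescaled disks $\wh{\Sigma}_i^n := \lambda_n(\Sigma_i^n - p_n)$ satisfy: (i) $\partial \wh{\Sigma}_i^n \subset \rth \setminus \B(n)$; (ii) $|A_{\wh{\Sigma}_1^n}|(\vec{0}) \geq 1$; and (iii) $\wh{\Sigma}_2^n$ and $\wh{\Sigma}_3^n$ still meet a ball of uniformly bounded radius $r_0$. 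Any $\lambda_n$ with $n \leq \lambda_n$ and $\lambda_n/n$ bounded achieves this (using $|p_n|<1/n$ so that $\lambda_n (1-1/n) \geq n$, while $\lambda_n\cdot(2/n)$ stays bounded).

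Next, I would apply Theorem~\ref{thm2.1} to $\{\wh{\Sigma}_1^n\}$. Case~(A) would give smooth convergence to a helicoid and hence uniform local curvature bounds, contradicting the blow-up we engineered at $\vec{0}$. Therefore Case~(B) holds: $\wh{\Sigma}_1^n$ converges $C^\alpha$ (for every $\alpha\in(0,1)$) to a foliation $\cF$ of $\rth$ by parallel planes, away from the singular line $\ell$ through $\vec{0}$ orthogonal to the leaves. Rotate so that $\ell$ is the $x_3$-axis and the leaf through $\vec{0}$ is $P_0 = \{x_3 = 0\}$. Since $\wh{\Sigma}_2^n, \wh{\Sigma}_3^n$ are embedded, disjoint from $\wh{\Sigma}_1^n$, and still meet the ball $\B(r_0)$, and since the sheets of $\wh{\Sigma}_1^n$ become dense in $\rth \setminus \ell$, a subsequence of each of $\wh{\Sigma}_2^n, \wh{\Sigma}_3^n$ must also $C^\alpha$-accumulate on leaves of $\cF$ inside $\B(r_0) \setminus \{|(x_1,x_2)|<\tau\}$ for every $\tau > 0$.

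Finally, the contradiction is extracted by pigeonhole together with the one-sided estimate. By pigeonhole, at least two of the three rescaled disks lie on the same side of $P_0$ near $\vec{0}$ outside a small tube about $\ell$. The embedded disjointness then forces one of them---call it $\wh{\Sigma}^n$---to be sandwiched between $\wh{\Sigma}_1^n$ (which is $C^\alpha$-close to $P_0$) and the other same-side disk, and in particular to avoid $\{x_3 = 0\} \cap \B(R)$ for any fixed $R > 0$ once $n$ is large. Its boundary lies outside $\B(n) \supset \B(R)$, so Theorem~\ref{th} applies and yields $|A_{\wh{\Sigma}^n}| \leq C/R$ on $\B(\ve R) \cap \{x_3 > 0\}$; undoing the rescaling produces a uniform bound for $|A|$ on the corresponding original disk in a fixed extrinsic ball around $\vec{0}$, contradicting the blow-up. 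The main obstacle is the last step: rigorously arguing, from embeddedness and disk topology, that two of the three disks are forced onto the same side of $P_0$ and that the sandwiched one genuinely misses $\{x_3 = 0\} \cap \B(R)$. This is precisely where the numerical hypothesis \emph{three} is essential---two disjoint disks can coexist as two sheets of a limit helicoid or on opposite sides of $P_0$, but three cannot avoid the sandwich configuration required to invoke Theorem~\ref{th}.
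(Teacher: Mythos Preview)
First, note that the paper does \emph{not} prove this statement: Corollary~\ref{cest} is quoted verbatim from \cite{mt13} and is used here as a black box, so there is no ``paper's own proof'' to compare against. I will therefore evaluate your sketch on its own merits.

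Your overall strategy --- contradiction, rescaling, invoking Theorem~\ref{thm2.1}, and then the one-sided estimate --- is the natural one, but the argument as written has two genuine gaps.

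\textbf{Case~A is not actually ruled out.} You arrange $|A_{\wh{\Sigma}_1^n}|(\vec{0})\geq 1$ after rescaling by $\lambda_n\sim n$, since $|A_{\Sigma_1^n}|(p_n)>n$ only guarantees a \emph{lower} bound of order one on the rescaled curvature at $\vec 0$, not divergence. Smooth convergence to a helicoid (Case~A) is perfectly consistent with $|A_{\wh{\Sigma}_1^n}|(\vec 0)\to c\in[1,\infty)$, so your sentence ``contradicting the blow-up we engineered at $\vec 0$'' is incorrect. You must treat Case~A separately, e.g.\ by using the helicoid sheets of $\wh\Sigma_1^n$ as barriers for the other two disks, or by a different choice of scale.

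\textbf{The Case~B contradiction targets the wrong disk.} In your sandwich argument, the disk $\wh\Sigma^n$ that gets trapped between $\wh\Sigma_1^n$ and another disk, and to which you apply Theorem~\ref{th}, is by construction one of $\wh\Sigma_2^n,\wh\Sigma_3^n$ --- \emph{not} the blow-up disk $\wh\Sigma_1^n$. A curvature bound on $\wh\Sigma^n$ says nothing about $|A_{\Sigma_1^n}|(p_n)$, so there is no contradiction with your hypothesis. The missing idea is to close the loop: once you know (via the barrier/sandwich mechanism) that, say, $\wh\Sigma_2^n$ is nearly flat and graphical over a definite disk in a plane through $\vec 0$, then $\wh\Sigma_2^n$ itself plays the role of the plane in Theorem~\ref{th} applied to $\wh\Sigma_1^n$. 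Since $\wh\Sigma_1^n$ is embedded and disjoint from $\wh\Sigma_2^n$, the component of $\wh\Sigma_1^n$ through $\vec 0$ lies on one side of this nearly-planar sheet, and the one-sided estimate then bounds $|A_{\wh\Sigma_1^n}|(\vec 0)$ --- which \emph{does} contradict the blow-up. Your pigeonhole/sandwich step is aimed in the wrong direction; reverse the roles so that the flat disk is the barrier and the blow-up disk is the target.
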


\subsection{A weak chord arc property for certain $H$-disks.}
 Throughout this  section and the next one, $\Sigma$ will denote a compact
$H$-disk in $\rth$ with piecewise smooth boundary.

The following main result in this section generalizes the similar
Proposition~2.1 in~\cite{cm35} for
minimal disks to certain $H$-disks. The reader should keep in mind
that the convex hull property of minimal surfaces fails in the case of $H$-surfaces
with $H>0$, and this failure contributes to making the proof of the next
proposition and some other results
in this paper more difficult than in the $H=0$ case.
\begin{proposition} \label{cm2.1}  There exists
$\delta_2  \in (0, \frac{1}{2})$ such
that  the following holds.

If $\Sigma$ satisfies $\partial \Si \subset
\partial \B(p,R)$ and  $p \in \Sigma$, then for all $s\in (0,R]$:
\ben
\item $\Sigma(p, \delta_2 s)$ is a disk with piecewise smooth boundary
$\partial \Sigma(p,\delta_2 s)\subset \partial \B(p,\de_2s)$.
\item $ \Sigma (p, \delta_2 s) \subset \ov{B}_\Sigma (p, \frac{s}{2})$.
\een
\end{proposition}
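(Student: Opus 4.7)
Proceed by contradiction and rescaling. Suppose no such $\delta_2$ exists. Then for each $n$ there is an $H_n$-disk $\Sigma_n$ with $\partial\Sigma_n\subset\partial\B(p_n,R_n)$, a scale $s_n\in(0,R_n]$, and $\delta_n\searrow 0$ for which (1) or (2) fails with $\delta=\delta_n$ at $s=s_n$. Translate $p_n$ to $\vec{0}$ and rescale $\Sigma_n$ by $1/(\delta_n s_n)$: the extrinsic ball $\B(p_n,\delta_n s_n)$ becomes $\B(1)$, while $\partial\Sigma_n\subset\rth\setminus\B(1/\delta_n)$ with $1/\delta_n\to\infty$. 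In these rescaled coordinates, the failure reads: either $\Sigma_n(\vec{0},1)$ is not a disk with piecewise smooth boundary in $\partial\B(1)$, or it contains a point at intrinsic distance from $\vec{0}$ greater than $1/(2\delta_n)\to\infty$.

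\textbf{Case analysis via curvature on $\Sigma_n\cap\oB(1)$.} I distinguish whether $|A_{\Sigma_n}|$ stays bounded there. In the \emph{bounded curvature case}, a uniform bound on $|A_{\Sigma_n}|$ forces a uniform bound on $H_n$ (since $|H_n|\le|A_{\Sigma_n}|/\sqrt{2}$), and standard smooth compactness together with the fact that $\partial\Sigma_n$ recedes to infinity yields, after passing to a subsequence, a smooth embedded CMC limit $\Sigma_\infty$ through $\vec{0}$. The component $\Sigma_\infty(\vec{0},1)$ is then a smooth embedded disk with piecewise smooth boundary in $\partial\B(1)$ and intrinsic diameter bounded by a universal constant depending only on the curvature bound. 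For $n$ large the same properties pass to $\Sigma_n(\vec{0},1)$, contradicting both failure modes once $1/(2\delta_n)$ exceeds this universal constant.

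\textbf{Curvature blow-up case.} Otherwise, after a subsequence choose $q_n\in\Sigma_n\cap\oB(1)$ with $|A_{\Sigma_n}|(q_n)\to\infty$ and $q_n\to q_\infty$. Translating by $-q_n$, the surfaces $\wt\Sigma_n:=\Sigma_n-q_n$ are $H_n$-disks through $\vec{0}$ with $|A_{\wt\Sigma_n}|(\vec{0})\to\infty$ and boundaries receding to infinity, so Theorem~\ref{thm2.1} applies to $\{\wt\Sigma_n\}$. In either conclusion (A) (helicoid limit) or (B) (foliation by planes) of that theorem, every component of $\wt\Sigma_n\cap\B(1)$ is an open disk whose closure in $\wt\Sigma_n$ is a compact disk with piecewise smooth boundary in $\partial\B(1)$ and with intrinsic diameter at most $10$ (respectively, less than $3$ in case (B), via the subdomains $\mathcal{C}_n$). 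Translating back gives the same control on components of $\Sigma_n\cap\B(q_n,1)$. Covering $\oB(1)$ by finitely many extrinsic balls $\B(q_n^{(j)},1)$ centered at additional curvature blow-up points (and re-applying the dichotomy inside each such ball, whose boundary remains at extrinsic distance $\ge 1/\delta_n - 2\to\infty$) yields a uniform upper bound on the intrinsic diameter of every component of $\Sigma_n\cap\B(1)$. In particular, $\Sigma_n(\vec{0},1)$ is an open disk with piecewise smooth boundary in $\partial\B(1)$ and uniformly bounded intrinsic diameter, again contradicting the failure.

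\textbf{Main obstacle.} The delicate step is transferring the local structure provided by Theorem~\ref{thm2.1} (stated around the curvature blow-up point $q_n$) to the component $\Sigma_n(\vec{0},1)$ through the \emph{original} base point $\vec{0}$ when $q_n\not\to\vec{0}$. In the helicoid case, smooth convergence away from the axis makes this routine; in the foliation case, only $C^\alpha$ convergence away from the singular axis is available, so a covering/chaining argument over $\oB(1)$ is needed, gluing uniform intrinsic-diameter and topological control from Theorem~\ref{thm2.1} applied at each blow-up point in $\oB(1)$. A secondary issue is the piecewise smoothness of $\partial\Sigma_n(\vec{0},\delta_n s_n)$, which reduces to transversality of $\Sigma_n$ with $\partial\B(1)$ and can be arranged either by a small perturbation of the radius or by reading transversality directly off the classification of the limit in Theorem~\ref{thm2.1}.
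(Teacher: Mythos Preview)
Your architecture (contradiction, rescaling, curvature dichotomy, Theorem~\ref{thm2.1}) matches the paper's, but two of your three cases have real gaps.

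\textbf{Bounded curvature case.} ``Standard smooth compactness'' does not yield what you claim. A uniform bound on $|A_{\Sigma_n}|$ over $\oB(1)$ gives at best a limit \emph{lamination}, not a single surface, and even for a single limit leaf there is no a priori reason $\Sigma_\infty(\vec{0},1)$ is a disk with universally bounded intrinsic diameter. The paper splits this into $\lim\max|A|=0$ (trivial) and $\lim\max|A|=L\in(0,\infty)$, and in the latter case applies Theorem~\ref{thm2.1} \emph{twice}: first at the max-curvature points $p_n$ (Case~A forces a helicoid limit, hence $|A_{\wt\Sigma(n)}|(\vec{0})>\ve$ since the helicoid has strictly negative curvature everywhere), and then at $\vec{0}$ itself, so that Case~A of Theorem~\ref{thm2.1} now applies at the base point and delivers the disk/diameter conclusion there. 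You cannot shortcut this.

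\textbf{Blow-up case.} Two problems. First, your ``covering by balls centered at additional blow-up points'' is not available: there may be a single blow-up point, and Theorem~\ref{thm2.1} only describes $M_n\cap\B(1)$ centered \emph{there}. The paper's fix is to rescale by a fixed factor (namely $1/3$) before translating, so that the original ball $\B(\vec{0},1)$ lands inside $\B(1)$ in the coordinates of Theorem~\ref{thm2.1}; then Case~B(2) gives $\Delta_n:=\Sigma'(n)(\text{image of }\vec{0},\tfrac13)\subset\cD_n$ with $\cD_n$ a disk of intrinsic diameter $<3$. Second, and more seriously, this still does not make $\Delta_n$ a disk: $\cD_n$ may exit and re-enter the smaller ball, so $\Delta_n$ could fail to be simply connected. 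The paper rules this out by a mean curvature comparison: if $\Delta_n$ is not a disk, some boundary curve $\Gamma(n)\subset\partial\B(\cdot,\tfrac13)$ bounds a subdisk $D_n\subset\cD_n$ not contained in $\oB(\cdot,\tfrac13)$, and the farthest point of $D_n$ forces $H_n\ge 1/T_n$ with $T_n$ bounded (since $D_n\subset\B(2)$), contradicting $H_n\to 0$. Nothing in your proposal plays this role, and Theorem~\ref{thm2.1} does not supply it. Your remark that transversality ``can be arranged by a small perturbation of the radius'' also does not work here, since the claim is for a \emph{fixed} $\delta_2$ and all $s$; the paper handles possible tangencies via semi-analytic stratification.
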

\begin{proof}

Suppose that $\Sigma$ satisfies $\partial \Si \subset
\partial \B(p,R)$ and  $p \in \Sigma$.
We first prove that items~1 and 2 of the
proposition hold  for
some $\delta_2 \in (0, \frac{1}{2})$ in the special case that $s=R$.
Arguing by contradiction, suppose there is no such
universal $\delta_2$. Then there exists
a sequence $\Si(n)$ of $H_n$-disks and a sequence $R_n$
of positive numbers  such that
 \ben[1.]
  \item $\vec{0} \in \Si(n)$.
   \item  $\partial \Si
(n) \subset \partial \B(R_n)$.
\item Either $\Si(n)(\vec{0}, \frac{R_n}{n})$
is not a disk or it is not contained in
$\ov{B}_{\Sigma(n)}(\vec{0}, \frac{R_n}{2})$.  \een
Let $\widetilde{\Si}(n)$ be the sequence of rescaled disks
$\frac{n}{R_n} \Si(n)$, see Figure~\ref{figurepropos7}.
\begin{figure}\begin{center}
\includegraphics[width=5cm]{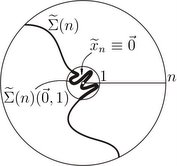}
\caption{The component $\widetilde{\Sigma }(n)(\vec{0},1)$ inside
the rescaled surface $\widetilde{\Sigma }(n)$.}
\label{figurepropos7}\end{center}
\end{figure}
Note that for all $n$, $\partial \widetilde{\Si}(n)\subset \partial \B(n)$
and  $\widetilde{\Sigma }(n)(\vec{0},1)$ is not a disk or it is not
contained in $\ov{B}_{\widetilde{\Sigma}(n)} (\vec{0}, \frac{n}{2})$.

After replacing by a subsequence, one of the following three cases holds:
\begin{enumerate}
\item $\lim_{n \to \infty}\max_{x\in\wt \Si(n)\cap\oB(1)}|A_{\widetilde\Sigma(n)}(x) |=0;$
\item $\lim_{n \to \infty}\max_{x\in\wt \Si(n)\cap\oB(1)}|A_{\widetilde\Sigma(n)}(x) |=\infty ;$
\item $\lim_{n \to \infty}\max_{x\in\wt \Si(n)\cap\oB(1)}|A_{\widetilde\Sigma(n)}(x) |=L\in(0,\infty).$
\end{enumerate}

First consider the case that $\lim_{n \to \infty}\max_{x\in\wt \Si(n)\cap\oB(1)}|A_{\widetilde\Sigma(n)}(x) |=0.$
Then  for $n$ large, $\widetilde{\Sigma
}(n)(\vec{0},1)$ is an almost totally geodesic disk whose diameter is
bounded by $3$ and that is a small graph over its projection to the unit
disk in the $(x_1,x_3)$-plane, which gives a contradiction.

Next, suppose that $\lim_{n \to \infty}\max_{x\in\wt \Si(n)\cap\oB(1)}|A_{\wt \Sigma(n)}(x) |=\infty$
and, after going to a subsequence, let $p_n\in \wt \Sigma(n)\cap\oB(1)$ be a sequence of points such that
\[
\lim_{n\to\infty} p_n=p\in \oB(1)\,\text{ and }\lim_{n \to \infty}|A_{\wt \Sigma(n)}(p_n) |=\infty.
\]
Then, we can apply Theorem~\ref{thm2.1} to the sequence of translated surfaces
$\Si'(n)=\frac13[\wt \Si(n)-p_n]$. In particular, since
$\lim_{n \to \infty}|A_{ \Sigma'(n)}(\vec 0) |=\infty$, Case B of Theorem~\ref{thm2.1} applies.
Note that the origin, as a point contained in $\wt\Sigma(n)$, has become 
the point $-\frac{p_n}3$ in $\Sigma'(n)$ and, by our hypothesis,  $\Sigma'(n)(-\frac{p_n}3,\frac13)$ 
is not a disk or it is not
contained in $\ov{B}_{\Sigma'(n)} (-\frac{p_n}3, \frac{n}{6})$. By Case B of Theorem~\ref{thm2.1},
since $|\frac{p_n}{3}|\leq \frac 13$, we have that
\[
\Sigma'(n)(-\frac{p_n}3,\frac13)\subset [\Sigma'(n)\cap \oB(1)]\subset \cD_n \subset \B(2)
\]
 where $\cD_n$ is a disk with intrinsic diameter bounded by 3 and
 $\partial \cD_n\subset \B(2)-\oB(1)$. Let $\Delta_n$ denote $\Sigma'(n)(-\frac{p_n}3,\frac13)$.
 Since $\Delta_n\subset \cD_n$ and the intrinsic diameter of $\cD_n$ is bounded by 3,
 in order to obtain a contradiction, it suffices to prove that $\Delta_n$ is a disk.
 In the case where $\cD_n$  intersects
$\partial \B(-\frac{p_n}3,\frac13)$ transversely,  $\Delta_n$ is a smooth compact surface
and the following arguments
can be simplified; therefore, on a first reading of the next paragraph
the reader might want to consider this special generic case first.

Since $\Delta_n$ is a two-dimensional semi-analytic
set in $\rth$ and $\Delta_n\cap \partial \B(-\frac{p_n}3,\frac13)$ is an analytic subset of the sphere
$\partial \B(-\frac{p_n}3,\frac 13)$, then, by \cite{lo1}, $\Delta_n$
admits a triangulation  by analytic  simplices, and the interiors of
the 2-dimensional simplices are contained in
$\B(-\frac{p_n}3,\frac 13)$ because otherwise, by analyticity, then
$\Delta_n\subset\partial \B(-\frac{p_n}3,\frac13)$ which is false. Since the inclusion map
of $\cD_n$ is an injective immersion, then it follows that $\Delta_n$
is a semi-analytic subset of $\cD_n$ that can be triangulated with
a finite number of closed 2-dimensional analytic simplices whose
interiors are contained in $\Delta_n\cap\B(-\frac{p_n}3,\frac13)$  and
$\Delta_n\cap \partial \B(-\frac{p_n}3,\frac13)$ is  a connected 1-dimensional
analytic subset of $\cD_n$, where we identify $\cD_n$ with its image in $\rth$;
note that $\Delta_n\cap\partial \B(-\frac{p_n}3,\frac13)$  does not contain any isolated points
by the mean curvature comparison principle.  By the
elementary topology of the disk $\cD_n$ and using arguments as in~\cite{my1}, one can check that
$\Delta_n$ fails to be a
disk with piecewise smooth analytic boundary
if and only if there exists a
simple closed piecewise analytic  curve $\G(n)  $ contained in the 1-dimensional simplicial sub-complex
of $\Delta_n\cap\B(-\frac{p_n}3,\frac13)$ such that  $\G(n) $
does not bound a  disk in $\Delta_n\cap\B(-\frac{p_n}3,\frac13)$. In the case that $\cD_n$
is transverse to $\partial \B(-\frac{p_n}3,\frac13)$, then $\G(n)$ can be chosen to be the boundary curve
of  a   component of $\cD_n\cap (\rth -\B(-\frac{p_n}3,\frac13)) $ that has its 
entire boundary in $\partial \B(-\frac{p_n}3,\frac13)$.

Arguing by contradiction, suppose   $\Delta_n$ is not a compact disk.
Let $D_n$ denote the compact subdisk of $\cD_n$ with
boundary $\G(n)\subset \cD_n\cap \partial \B(-\frac{p_n}3,\frac13)$ and notice
that $D_n\not \subset \oB(-\frac{p_n}3,\frac13)$.  Hence, there is a point $q_n\in D_n$ that
has maximal distance $T_n>\frac13$ from the $-\frac{p_n}3$. Since the boundary of $D_n$ lies
in $\partial B(-\frac{p_n}3,\frac13)$ and $D_n$ lies in $\rth - \oB(-\frac{p_n}3,\frac13)$ near $\partial D_n$,
then $q_n$ is an interior point of $D_n$ not contained in $\B(-\frac{p_n}3,\frac13)$
and $D_n$ lies inside the closed ball $\ov{\B}(-\frac{p_n}3,T_n)$ and intersects
$\partial \B(-\frac{p_n}3,T_n)$ at the point $q_n$.  By the mean curvature
comparison principle applied at the point $q_n$,
the constant mean curvature of $\cD_n$ is at least $1/T_n$.
By Theorem~\ref{thm2.1} the constant
 mean curvature values of the surfaces $\cD_n$ are tending to zero as  $n$ goes
to infinity (portions of the surfaces $\S'(n)$ are converging to planes), then
the interior points $q_n\in D_n\subset \cD_n\subset \B(2)$
are diverging to infinity in $\rth$ as $n$ goes
to infinity, which is a contradiction and proves that $\Delta_n$ must be a disk.

Finally, suppose that $\lim_{n \to \infty}\max_{x\in\wt \Si(n)\cap\oB(1)}|A_{\wt \Sigma(n)}(x) |=L\in(0,\infty)$
and, after going to a subsequence, let $p_n\in \wt \Sigma(n)\cap\oB(1)$ be a sequence of points such that
\[
\lim_{n\to\infty} p_n=p\in \oB(1)\,\text{ and }\lim_{n \to \infty}|A_{\wt \Sigma(n)}(p_n) |=L.
\]
In this case, after going to a subsequence, one of the following sub-cases holds.
\begin{enumerate}
\item[3A.] $\lim_{n \to \infty}\max_{x\in\wt \Si(n)\cap\oB(2)}|A_{\wt \Sigma(n)}(x) |<\infty;$
\item[3B.] $\lim_{n \to \infty}\max_{x\in \wt \Si(n)\cap\oB(2)}|A_{\wt \Sigma(n)}(x) |=\infty$.
\end{enumerate}
If Case 3A holds, then by applying Theorem~\ref{thm2.1} to the sequence of translated surfaces
$\wt\Si(n)-p_n$ we obtain that, after going to a subsequence, $\wt\Si(n)-p_n$ converges  smoothly
with multiplicity one or two on compact
subsets of $\rth$ to a helicoid containing the origin, which is a
surface of negative curvature. This being the case then, for $n$
sufficiently large, $|A_{\wt\Sigma(n)}(\vec 0) |>\ve\in(0,1)$ and Case A of Theorem~\ref{thm2.1}
applies to the sequence of surfaces $\wt\Si(n)$. In particular,  every component
$\Delta$ of $\wt\Si(n)\cap \B(1)$  is an open disk
whose closure $\ov{\Delta}$ in $\wt\Si(n)\cap \B(1)$
is a compact disk with piecewise smooth boundary, and
where the intrinsic distance in $\wt\Si(n)$
between  any two points in  $\ov{\Delta}$ is less than 10. This contradicts our assumption.

If Case 3B  holds then one can obtain a contradiction by arguing similarly to Case 2.
This completes the proof that for some
$\delta_2 \in (0, \frac{1}{2})$ in the case $s=R$.  This fixes the value of $\de_2$.

Fix $s\in (0,R)$. By the arguments in the previous case where $s=R$,
$\Si  (p,s)$ admits an analytic triangulation and it
is the closure in $\S$ of a connected open
surface. Hence by the elementary topology of a disk,
the set $\wh{\Si} [p,s]\subset \S$ that is the closure  of the
complement of the annular component of
$\Si -\Si  (p,s)$ that contains $\partial \Si $ is a
piecewise-smooth  subdisk of $\Si $; also note that
$p\in\wh{\Si} [p,s]$ and $\partial \wh{\Si} [p,s] \subset \partial  \B(s)$.
Applying the previously proved case where $s=R$  to the
$H$-disk $\wh{\Si} [p,s]$ and the subdomain
$\wh{\Si} [p,s](p,s)$ (which is equal to the domain $\Sigma  (p, s)$), one has that
$\Sigma  (p, \delta_2 s)$ is a disk with
$\partial \Sigma(p,\delta_2 s)\subset \partial \B(p,\de_2s)$ and
$\Sigma  (p, \delta_2  s)\subset \ov{B}_{\Si }(p,\frac{s}{2})$.

This finishes the proof of the proposition.
\end{proof}

\subsection {Expanding the scale of being $\delta_2$ weakly chord arc.}
We begin by giving a definition characterizing
certain intrinsic geodesic balls of $\Si$.

\begin{definition}{\rm (Weakly chord arc)  Given
$\delta \in (0, \frac{1}{2}),$ an intrinsic  ball $\ov{B}_\Si(x,R) \subset
\Si$ is said to be $\delta$ {\em  weakly chord arc} if \ben[1.] \item
For all $s \in(0, R)$, $\ov{B}_\Si(x,s) \subset
\Int(\Si)$.
\item For all $s \in(0, R]$, \ben
\item $\Si(x, \delta s)$ is a disk;
\item
$\Si(x, \delta s) \subset \ov{B}_\Si(x, \frac{s}{2}).$ \een \een}
\end{definition}

\begin{remark} \label{remark} {\em Suppose that $x\in \Si$.
Notice that if an intrinsic ball $\ov{B}_\Sigma(x,R) \subset \Sigma - \partial \Si$,
then for any $s \in(0, R)$,  $\ov{B}_\Sigma(x,s)$ is
contained in the interior of $\Si$.
Also note that if $\partial \Si \subset \partial \B(x,R)$,
then Proposition~\ref{cm2.1} implies that
$\ov{B}_{\Si}(x,R)$ is $\de_2$ weakly chord arc.}
\end{remark}

\begin{definition} Given
$\delta \in (0, \frac{1}{2})$ and $x \in \Si - \partial \Si$,
\[
R(x,\delta) = \sup \{ R < {\rm dist} (x, \partial \Si) \mid
{\rm the\; ball} \;\ov{B}_\Si(x,R)\;{\rm  is} \;\delta \;
 {\rm weakly \;chord\; arc}\}.
 \]
\end{definition}

Our definition of the $R(x, \delta)$ function is the
same as the one given in~\cite{mr13} and differs somewhat from the
related $R_\delta(x)$ function defined in~\cite{cm35}.

We now state and prove a proposition that in certain cases
allows us to prove that if a given ball $\ov{B}_\Si(x, R)$ in $\Si$ is $\delta_2$
weakly chord arc, then $\ov{B}_\Si(x, 5R)$ is also $\delta_2$ weakly
chord arc; here, $\delta_2$ is the constant
defined in Proposition~\ref{cm2.1}.
The next result corresponds to the closely
related Proposition~3.4 in~\cite{cm35}
and Proposition~8 in~\cite{mr13}.

\begin{proposition} \label{propcm3.4}  There exists a
constant $C_b > 5$ so that if
$\ov{B}_\Sigma(y, C_bR) \subset \Si - \partial \Si$ satisfies
\[
 ``every \; intrinsic \;subball \; \ov{B}_\Sigma(z, R) \subset \ov{B}_\Si(y,
C_b R) \; is\;  \delta_2 \; weakly \; chord  \; arc,"
\]
then, $\ov{B}_\Si(y, 5 R)$ is $\delta_2$ weakly chord arc. In
particular, $R(y, \delta_2) \geq 5 R$.
\end{proposition}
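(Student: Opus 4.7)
The plan is to argue by contradiction, rescale to unit scale, and apply the Limit Lamination Theorem~\ref{thm2.1} together with the one-sided curvature estimate Theorem~\ref{th}. First observe that the conclusion is automatic for scales $s\in(0,R]$: the hypothesis applied with $z=y$ says $\ov{B}_\Sigma(y,R)$ is itself $\delta_2$ weakly chord arc, which directly yields conditions (2a) and (2b) of the definition for all $s\in(0,R]$; and the requirement $\ov{B}_\Sigma(y,s)\subset \Int(\Sigma)$ for $s<5R$ is immediate from $\ov{B}_\Sigma(y,C_bR)\subset \Sigma-\partial\Sigma$ once $C_b>5$. Hence the content of the proposition lies entirely in verifying (2a) and (2b) in the range $s\in(R,5R]$.

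Assume no constant $C_b$ works. Then for each $n$ there are an $H_n$-disk $\Sigma_n$, a point $y_n$ and a radius $R_n$ such that $\ov{B}_{\Sigma_n}(y_n,nR_n)\subset \Sigma_n-\partial\Sigma_n$, every intrinsic subball of radius $R_n$ in this ball is $\delta_2$ weakly chord arc, yet $\ov{B}_{\Sigma_n}(y_n,5R_n)$ is not. Set $\widetilde{\Sigma}_n=(1/R_n)(\Sigma_n-y_n)$; this is an $(H_nR_n)$-disk in which every unit intrinsic subball of $\ov{B}_{\widetilde{\Sigma}_n}(\vec{0},n)$ is $\delta_2$ weakly chord arc but $\ov{B}_{\widetilde{\Sigma}_n}(\vec{0},5)$ is not. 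By the first paragraph the failure must occur at some scale $s_n\in(1,5]$; after extraction we may assume $s_n\to s_\infty\in[1,5]$ and that $|A_{\widetilde{\Sigma}_n}|(\vec{0})\to L\in[0,\infty]$.

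If $L=0$, then the $\widetilde{\Sigma}_n$ are asymptotically flat near $\vec{0}$ and, using the unit-scale chord arc hypothesis at nearby base points, they sub-converge smoothly on compact sets to a planar disk through $\vec{0}$, for which the weakly chord arc conditions trivially hold at scale $s_\infty$; smooth convergence then contradicts the failure at $\widetilde{\Sigma}_n$. If $L>0$, Theorem~\ref{thm2.1} applies and after extraction either Case~A holds (smooth multiplicity one or two convergence to a helicoid $M_\infty$) or Case~B holds (convergence to a foliation of $\rth$ by parallel planes, with curvature blow-up along a vertical line $\ell$). In Case~A, smooth convergence on compact sets together with the chord arc behavior of the helicoid (obtained from Proposition~\ref{cm2.1} applied to a suitable compact subdisk of $M_\infty$ with boundary on an extrinsic sphere) transports the weakly chord arc conditions at scale $s_\infty$ back to $\widetilde{\Sigma}_n$ for large $n$, a contradiction. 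In Case~B, the mean curvature $H_nR_n\to 0$, each component of $\widetilde{\Sigma}_n\cap\B(1)$ is a compact disk of intrinsic diameter at most $3$, and one combines the scale-one chord arc hypothesis at every point of $\ov{B}_{\widetilde{\Sigma}_n}(\vec{0},n)$ with Theorem~\ref{th} applied to the approximate horizontal leaves on either side of the sheet of $\widetilde{\Sigma}_n$ through $\vec{0}$ to conclude that $\widetilde{\Sigma}_n(\vec{0},\delta_2 s_n)$ is a single embedded disk trapped in a thin horizontal slab and of intrinsic diameter at most $s_n/2$, again contradicting the failure.

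The main obstacle is Case~B: the convergence to the plane foliation is only $C^\alpha$ away from the singular line $\ell$, so topological and intrinsic properties of the limit cannot be transferred to $\widetilde{\Sigma}_n$ via smooth convergence. The key device will be to use the uniform unit-scale chord arc hypothesis to cover $\ov{B}_{\widetilde{\Sigma}_n}(\vec{0},5)$ by extrinsic disks of controlled intrinsic diameter, and to use the one-sided curvature estimate Theorem~\ref{th} to detect how these pieces sit as near-horizontal multigraphs between consecutive leaves of the limit foliation, thereby ruling out both topological wrapping of $\widetilde{\Sigma}_n(\vec{0},\delta_2 s_n)$ around $\ell$ and its intrinsic spilling outside $\ov{B}_{\widetilde{\Sigma}_n}(\vec{0},s_n/2)$; this is the $H$-analogue of the role played by the one-sided curvature estimate in Proposition~3.4 of~\cite{cm35}.
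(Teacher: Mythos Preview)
Your approach has genuine gaps and differs substantially from the paper's.

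The application of Theorem~\ref{thm2.1} is not justified: that theorem requires the \emph{extrinsic} condition $\partial M_n\subset\rth-\B(n)$, whereas your hypothesis only gives that the \emph{intrinsic} distance from $\vec{0}$ to $\partial\widetilde{\Sigma}_n$ is at least $n$; since extrinsic distance is bounded above (not below) by intrinsic distance, $\partial\widetilde{\Sigma}_n$ could lie well inside $\B(n)$. Passing to a subdisk with boundary on a fixed sphere only yields the conclusions of Theorem~\ref{thm2.1} at a fixed scale, which is useless here. Even granting the theorem, your case analysis does not close. In the $L=0$ case, $|A_{\widetilde{\Sigma}_n}|(\vec{0})\to 0$ gives no curvature control away from the origin, and the weakly chord arc hypothesis carries no curvature information, so smooth subconvergence to a plane is unjustified. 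In Case~A the intrinsic diameter bound from Theorem~\ref{thm2.1} is $10$, not the required $s_n/2\le 5/2$, and Proposition~\ref{cm2.1} does not apply to the noncompact helicoid, so ``transporting'' its chord arc behavior needs an argument you have not given. Your Case~B paragraph is a description of what one would like to show, not a proof.

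The paper argues quite differently and uses neither Theorem~\ref{th} nor Theorem~\ref{thm2.1}. It first disposes of the case $\Sigma(n)(y_n,5R_n)\cap\partial B_{\Sigma(n)}(y_n,C_nR_n)=\emptyset$ by filling in to a subdisk $\widehat{\Sigma}(n)$ with $\partial\widehat{\Sigma}(n)\subset\partial\B(y_n,5R_n)$ and invoking Proposition~\ref{cm2.1} directly. In the remaining case there is a path $\gamma_n\subset\Sigma(n)(y_n,5R_n)$ from $y_n$ to a point at intrinsic distance $C_nR_n\to\infty$; after rescaling by $1/R_n$ one finds along $\widetilde{\gamma}_n\subset\oB(5)$ arbitrarily many points at mutually diverging intrinsic distance, and the unit-scale weakly chord arc hypothesis makes the extrinsic $\de_2$-disks around them pairwise disjoint with boundaries on $\partial\B(\,\cdot\,,\de_2)$. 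Three such disks accumulate at a point $q$, and Corollary~\ref{cest} (three disjoint $H_i$-disks), not the limit lamination theorem, supplies a uniform curvature bound. A prolongation argument then produces a complete almost-embedded limit $f\colon F\to\rth$ whose universal cover is shown stable via a positive Jacobi field built from the normalized difference of two converging sheets; hence $f(F)$ is a plane. This forces $\partial B_{\widetilde{\Sigma}(n)}(\widetilde{z}_1(n),11)\subset\rth-\oB(5)$ for large $n$, contradicting that $\widetilde{z}_1(n)$ and $\widetilde{z}_2(n)$ lie on the connected arc $\widetilde{\gamma}_n\subset\oB(5)$. The key idea you are missing is precisely this: use the \emph{failure} of the chord arc condition to manufacture many disjoint sheets in a bounded extrinsic region, and then invoke Corollary~\ref{cest} and stability, rather than trying to feed the raw sequence into Theorem~\ref{thm2.1}.
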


\begin{proof}  Arguing by contradiction, suppose that
Proposition~\ref{propcm3.4} fails. Then there  exist a sequence of $H_n$-disks $\Sigma(n)$ and constants $
C_n > 5n$, $R_n>0$ satisfying:
\ben[1.]
\item  $\ov{B}_{\Si(n)} (y_n, C_n R_n)
\subset \Si(n) - \partial \Si(n).$
\item Every intrinsic subball
$\ov{B}_{\Sigma(n)} (z, R_n) \subset \ov{B}_{\Sigma(n)} (y_n, C_n R_n)$ is
$\delta_2$ weakly chord arc. \item $\ov{B}_{\Sigma(n)}(y_n, 5R_n)$ is not
$\delta_2$ weakly chord arc.
 \een

Let us first assume that, after passing to a subsequence,
\[
\Sigma (n)(y_n,5R_n)\cap
\partial B_{\Sigma (n)}(y_n,C_nR_n)=\mbox{\O } \quad \mbox{for all $n$.}
\]
 Since
$\ov{B}_{\Sigma (n)}(y_n,C_nR_n)\subset \Sigma (n)-\partial \Sigma (n),$
the above intersection equation implies $\Si(n)(y_n, 5R_n)\subset
\Sigma (n)-\partial \Sigma (n)$. By the arguments in the last paragraph
of the proof of Proposition~\ref{cm2.1}, $\Si(n)$ contains a compact
subdisk $\wh{\Si}(n)\subset \Si-\partial\Si$ with $\partial \wh{\Si}(n)\subset \partial \B(y_n,5R_n)$
and $\Si(n)(y_n, 5R_n)=\wh{\Si}(n)(y_n, 5R_n)$. Since
$\ov{B}_{\Sigma (n)}(y_n, 5R_n)=\ov{B}_{\wh{\Sigma} (n)}(y_n, 5R_n)$,
Remark~\ref{remark} implies
$\ov{B}_{\Sigma (n)}(y_n, 5R_n)$ is $\delta_2$ weakly chord arc,
which is a contradiction to item 3 above.
\begin{figure}
\begin{center}
\includegraphics[width=8.5cm]{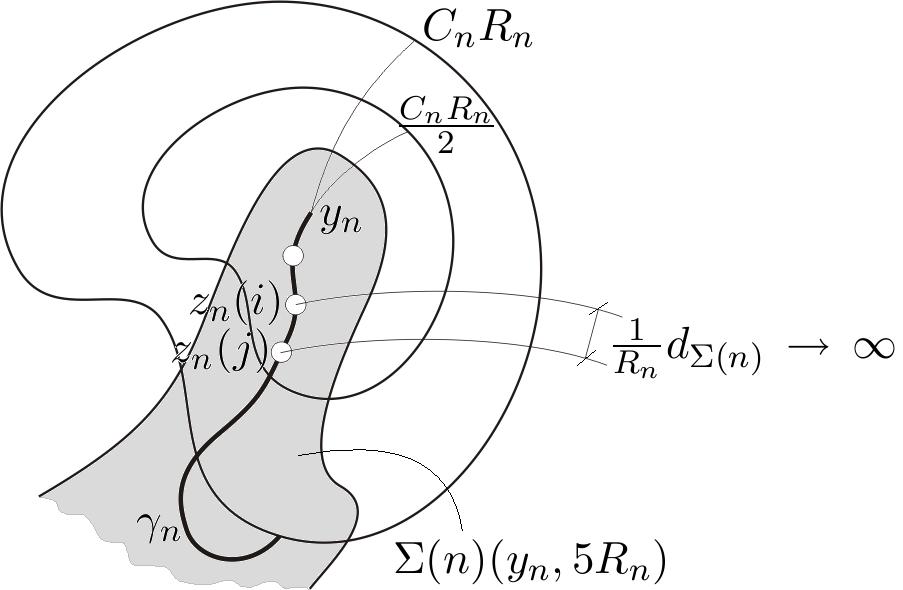}
\caption{$d_{\Sigma (n)}$ denotes the intrinsic distance in $\Sigma
(n)$.} \label{figurepropos9}
\end{center}
\end{figure}
Hence, for the remainder of the proof we shall assume that
\[
\Sigma (n)(y_n,5R_n)\cap \partial
B_{\Sigma (n)}(y_n,C_nR_n)\neq \mbox{\O } \quad \mbox{for all $n$.}
\]

Since
$\Sigma (n)(y_n,5R_n)$ is path connected, we can find a path $\gamma _n
\subset \Sigma (n)(y_n,5R_n)$ starting at $y_n$ and ending at a
point of $\partial B_{\Si(n)}(y_n, C_n R_n)$. Homothetically scale the surfaces $\Sigma(n)$
by $\frac{1}{R_n}$ from the points $y_n$ to obtain new
surfaces $\widetilde{\Si}(n)$ passing through $y_n$;
we will use tilde to denote other related scaled objects as well.
Balls of radius $C_n R_n$ then become
balls of radius $C_n >5n$, and  balls of radius $R_n$ become
balls of radius one.  The corresponding expanded path
$\widetilde{\gamma }_n \subset \wt{\Sigma} (n)(\wt{y}_n,5)\subset \oB(\wt{y}_n,5)$ joins $\wt{y}_n=y_n$
with a point of  $\partial \B(\wt{y}_n,5)$ at an intrinsic
distance $C_n$ from $\wt{y}_n$.
Since $\lim_{n\to\infty}C_n= \infty $, there exists
a subset $\widetilde{S}_n=\{ \widetilde{z}_n(1),\widetilde{z}_n(2),\ldots
,\widetilde{z}_n(k(n))\} \subset \widetilde{\gamma }_n\cap
\ov{B}_{\widetilde{\Sigma }(n)}(\wt{y}_n,\frac{C_n}{2})$ satisfying:
\begin{enumerate}[P1.]
\item $\lim_{n\to\infty} k(n)=\infty$.
\item The intrinsic distance in $\widetilde{\Sigma }(n)$ between any two of the
points of $\widetilde{S}_n$ tends to infinity as $n$ goes to infinity.
\end{enumerate}
In the original scale, we have corresponding finite sets
\[S_n=\{ {z}_n(1),{z}_n(2),\ldots
,{z}_n(k(n))\}\subset
\gamma _n\cap \ov{B}_{\Sigma (n)}(y_n,\frac{C_nR_n}{2});\]
 see
Figure~\ref{figurepropos9}.

We claim that for any $\widetilde{z}\in \widetilde{S}_n$,
$\ov{B}_{\widetilde{\Sigma}(n)}(\widetilde{z},1)$ is $\de_2$ weakly chord arc, which
after scaling, is equivalent to proving that
$\ov{B}_{{\Sigma}(n)}({z},R_n)$ is $\de_2$ weakly chord arc.
Since $\wt{z}\in \ov{B}_{\wt{\Sigma} (n)}(\wt{y}_n,\frac{C_n}{2})$,
$\ov{B}_{\wt{\Sigma} (n)}(\wt{y}_n,{C_n})\subset
(\wt{\Si}(n)-\partial \wt{\Si}(n))$ and $C_n>5$, the intrinsic triangle
inequality implies that $\ov{B}_{\wt{\Sigma} (n)}(\wt{z},1)\subset \ov{B}_{\wt{\Sigma}
(n)}(\wt{y}_n,C_n)$ and so, $\ov{B}_{\Sigma (n)}(z,R_n)\subset \ov{B}_{\Sigma
(n)}(y_n,C_nR_n)$. The main hypothesis in
the statement of the proposition implies
under these conditions that $\ov{B}_{{\Sigma}(n)}({z},R_n)$
is $\de_2$ weakly chord arc. This proves
the desired claim and so, in particular, $\widetilde{\Sigma
}(n)(\widetilde{z},\delta _2)$ is a disk contained in $\ov{B}_{\widetilde{\Sigma
}(n)}(\widetilde{z},\frac12)$.

By the last claim and condition P2 above, the disks $\widetilde{\Sigma
}(\widetilde{z},\delta _2)$ are pairwise disjoint for distinct
points $\widetilde{z}$ of $\widetilde{S}_n$ for $n$ large. At this point
it is useful to make another normalization of the surfaces
$\Si(n)$ by translating them by the vector $-\vec y_n$ and so the points
$y_n$ are now equal to the origin: with this normalization, $\wt{y}_n=y_n=\vec{0}$.
Note that
for any $\widetilde{z}\in \widetilde{S}_n$, the disk $\widetilde{\Sigma
}(n)(\widetilde{z},\delta _2)$ is contained in $\B(6)$. This follows
since $\widetilde{z}\in \oB(5)$ and $\delta_2\in(0,\frac 12)$.
The number of disks
$\widetilde{\Sigma }(n)(\widetilde{z},\delta _2)$ centered at points
$\widetilde{z} \in \widetilde{S}_n$ goes to infinity as $n$ goes to infinity,
by condition P1 in our choice of the points. Hence, after
replacing by a subsequence, there is a point
$q\in \oB(5)$, where the number
of points in $\B(q, \frac{1}{n}) \cap
\widetilde{S}_n$ goes to infinity as $n$ goes to infinity.
In particular, we may assume that for each $n$, there exist
three distinct
points $\wt{z}_1(n),\wt{z}_2(n),\wt{z}_3(n)$ in $\B(q, \frac{1}{n}) \cap
\widetilde{S}_n$.  Since the
intrinsic distances between any two of
these points is diverging to infinity and for each $i$,
$\wt{\Sigma}(n)(\wt{z}_i( n),\de_2) \subset \ov{B}_{\wt{\Sigma}(n)}(\wt{z}_i(n),\frac12)$,
then the disks $\{\wt{\Sigma}(n)(\wt{z}_i( n),\de_2)\mid i=1,2,3\}$
form a pairwise disjoint collection.

It follows that for $n$ large, the boundaries of the disks
$\{\wt{\Sigma}(n)(\wt{z}_i( n),\de_2)\mid i=1,2,3\}$
are contained in $\rth-\B(q,\delta_2/2)$.  Hence, by Corollary~\ref{cest},
there exists a small $\delta'>0$ such that
the components $\{\wh{\Sigma}(n,i)\mid i=1,2,3\}$ of $\wt{\Sigma}(n)(\wt{z}_i( n),\de_2)
\cap \oB(q,\de')$ containing the respective points  $\wt{z}_i( n)$ have second
fundamental forms bounded by a universal constant
and so, after possibly replacing $\de'$ by a
smaller positive number, these components
are disks which are graphical over their
projections to a plane passing through $q$.
After replacing by another subsequence and after reindexing,
a sequence of  pairs $\wh{\Sigma}(n,1)$,
$\wh{\Sigma}(n,2)$
of these graphs converges to a stable
compact $H$-disk $\cD$ passing through $q$, for
some value of $H$. Moreover, we will assume that
the inner products
of the  unit normal vectors of $\wh{\Sigma}(n,1)$ and
$\wh{\Sigma}(n,2)$ are positive, even when $H=0$.
Repeated applications of Corollary~\ref{cest} together with a
prolongation argument, as carried out in the proof of
Proposition~3.4 in~\cite{cm35} and in the proof of Proposition~8
in~\cite{mr13}, demonstrates that $\cD$ is contained in the
image of a complete
immersion   $f\colon F\looparrowright \rth$
of constant mean curvature $H\geq0$ of bounded norm
of the second fundamental form, for some
complete surface $F$. Indeed, after possibly
replacing by a subsequence and reindexing, the sequence of surfaces
$$\ov{B}_{\wt \S(n)}(\wt z_1(n),n)\cup \ov{B}_{\wt \S(n)}(\wt z_2(n),n)$$
converges smoothly to $f(F)$
with multiplicity at least two.

Since $f(F)$ is a limit of embedded surfaces, then $f$
satisfies the additional property that it is almost-embedded in the sense
that if $p_1\neq p_2$ are points in $F$  with the same image $p$ in $f(F)$,
then  images of small  intrinsic neighborhoods of $p_1,p_2$
locally lie on one side
of each other at $p$; note that by the maximum
principle for $H$-surfaces, this non-embeddedness property of $F$
can only occur if $H>0$ and the mean curvature vectors
at $p$ of these two respective neighborhoods are negatives of each other.
Since complete almost-embedded $H$-surfaces of
bounded norm of the second fundamental form  are properly
immersed in $\rth$ by Corollary~2.5 in~\cite{mt3}
(also see Theorem~6.1 in~\cite{mr7}), $f$ is a
proper immersion of $F$ into $\rth$.

If the universal cover of the limit surface
$f(F)$ is a stable $H$-surface, then $f(F)$ is a
plane~\cite{lor2,rst1} that intersects $\oB(5)$.
But if the image surface $f(F)$ is a plane $P$,
then for $n$ sufficiently large,
$\ov{B}_{\wt \S(n)}(\wt z_1(n),11)$ would become arbitrarily close to a planar
disk of radius 11 contained in $P$ and which intersects $\oB(5)$,
and so  $\partial B_{\wt \S(n)}(\wt z_1(n),11)\subset [\rth-\oB(5)]$.
Since the intrinsic
distance between $\wt z_1(n)$ and $\wt z_2(n)$ is going to
infinity as $n$ goes to infinity, this implies that
$\wt z_1(n)$ and $\wt z_2(n)$ cannot be connected by a curve
in $\wt \S(n) \cap \oB(5)$.
This contradiction would prove the proposition.
Thus, it suffices to prove the following claim.
\begin{claim} \label{claim2.9}
The universal cover of $F$ is stable. \end{claim}

\begin{proof}
Let $\Pi\colon \langle\wt{F},\wt{p}\rangle \to \langle{F},p\rangle$
denote the universal cover of the pointed surface
$ \langle{F},p\rangle$, where $f(\Pi (\wt{p}))=f(p)=q\in \cD$.
The proof of this claim uses standard arguments to construct
a non-zero Jacobi field on an arbitrary open connected subset
$\Omega\subset \wt F$ with
compact closure. Since the norm of the second fundamental
form of $\wt F$ is bounded, there is
a normal disk bundle $\mathcal N$ of fixed radius that
submerses in $\rth$. Let $i\colon \mathcal N\to
\rth$ denote the submersion, then we give $\mathcal N$ the
flat metric induced by $i$. We consider
the surface $\wt F$ to be the zero section of $\mathcal N$
and the map $\Gamma\colon \mathcal N\to \wt F$
given by the nearest point projection is smooth. Let $\xi$
denote the unit normal vector field to $\wt F$.

Let $\Delta_n$ be the lift (preimage) in $\mathcal N$ of $$[\ov{B}_{\wt \S(n)}(\wt z_1(n),n)
\cup \ov{B}_{\wt \S(n)}(\wt z_2(n),n)]\cap i(\mathcal N) $$
via the submersion $i$, where we can make choices of preimages
$z_1'(n)\in i^{-1}(\wt{z}_1(n)),$ $ z_2'(n)\in i^{-1}(\wt{z}_2(n))$
that converge to the same point $q' \in i^{-1}(q)$.
By the nature of the convergence, $\Delta_n$ converges
smoothly to $\wt F$ in $\mathcal N$ with
multiplicity at least two. Namely,  each point
$p\in \wt F$ has a neighborhood $U_p$ which is the
uniform limit of at least two disjoint domains
$U_{p,1}(n)$, $U_{p,2}(n)$ in $\Delta_n$. Each of
these domains is a graph over $U_p$ via the nearest
point projection $\Gamma$ and the normal
vectors of such graphs at related points have inner products
converging to 1 as $n$ goes to infinity.

Let $\Omega\subset \wt F$ be an arbitrary open connected
subset  with compact closure and let
$\wt\Omega\subset \wt F$ be a precompact, simply-connected
domain containing $\Omega$ and the point $q'$. The usual
holonomy construction and the convergence  with multiplicity
at least two gives that there exists
two disjoint domains in $\Omega_1(n)$, $\Omega_2(n)$
in $\Delta_n$ such that the following holds. Each
$\Omega_i(n)$ is a graph over $\wt \Omega$ via the
nearest point projection and $z'_1(n)\in \Omega_1(n)$,  $z'_2(n)\in \Omega_2(n)$. Namely, there exists
$u_i^n\colon \wt \Omega\to \R$ such that
\[
\Omega_i(n)=\{p+u^n_i(p)\xi(p) \text{ with } p\in \Omega\}.
\]
Because $\Omega_1(n)$ and $\Omega_2(n)$ are disjoint,
we can assume that $u_2^n>u_1^n$. Moreover, by our previous choice of the points
$\wt z_1(n), \wt z_2(n) $,
we may assume that
the unit normal  vectors of $\Omega_1(n)$ and of
$\Omega_2(n)$  at corresponding points
over points of $\Omega$ have positive inner
products converging to 1 as $n$ goes to infinity.
A standard compactness argument
using the Harnack inequality shows that the positive
function  $$\frac{u_2^n-u_1^n}{u_2^n(q')-u_1^n(q')}$$
converges to a positive Jacobi function $w$ over $\wt \Omega$.
Thus $w|_{\Omega}$ is a positive Jacobi
function over $\Omega$ which implies that $\Omega$ is stable.
Since $\Omega$ was an arbitrary precompact domain
in $F$, this finishes the proof of the claim.
\end{proof}
As mentioned previously, Claim~\ref{claim2.9} completes the proof of the proposition.
\end{proof}

\subsection{The function $a_\delta$.}
For the remainder of Section~2, $\Si$ will be the
$H$-disk in the statement of Theorem~\ref{thm1.1}.
We claim that if the   theorem holds whenever $\Si$ is a smooth
$H$-disk,
then it  holds in general.  To see this claim holds, assume
$\de_1\in(0,\frac12)$ is such that
 Theorem~\ref{thm1.1} holds for smooth $H$-disks
and let  $\Sigma$ be
an $H$-disk which is non-smooth along its boundary. Suppose
$\ov{B}_\Si(x,R) \subset \Si-\partial \Si$ and consider the conditions below:
\ben[C1.]  \item $\Si (x,\delta_1 R)$ is a disk with piecewise smooth boundary
$\partial \Sigma(\vec{0},\delta_1 R)\subset \partial \B(\de_1R)$. \item
$
 \Si (x, \delta_1 R) \subset \ov{B}_\Si (x, \frac{R}{2}).
$\een
Since the compact intrinsic ball $\ov{B}_\Si(x,R)$ is contained in
the interior of a smooth sub $H$-disk $\Si'\subset  (\Si-\partial \Si) $,
then $\ov{B}_\Si(x,R)=\ov{B}_{\Si'}(x,R)$ and
$\Si (x, \delta_1 R)=\Si' (x, \delta_1 R)$.
Using that   Theorem~\ref{thm1.1} holds for $\Si'$
then implies that the two conditions C1 and C2 hold.  So henceforth we will assume
that $\Si$ is a \underline{smooth}
$H$-disk.

 We claim that
for $\delta \in (0, \frac{1}{2})$, the function
$$G_{\de}(z)=\frac{d_\Si(z, \partial \Si)}{R(z,\delta)}\colon
\Si-\partial \Si \to (0,\infty)$$
is bounded on $\Si-\partial \Si$ and is equal to $1$ in some small neighborhood
of $\partial \Si$. To see this first note that if for
some $\ve >0, \; p \in \Si -\partial \Si$ has distance at least
$\ve$ from $\partial \Si$, then $R(p,\delta)$ is greater than some
positive constant that only depends on $\ve$ and $\Si$.  This is
because the norm of the second fundamental form of $\Si$ is bounded and so for any
$\ve^\prime<\ve$ sufficiently small, there exists $\ve''<\ve'$ such that
$\ov{B}_\Si(p, \ve^\prime)$ is a graph over its projection to its tangent
plane at $p$ with small norm of its
gradient and  $\Si(p,
\frac{\ve^\prime}{2}) \subset \ov{B}_\Si (p, \frac{\ve^\prime}{2}+\ve'')$,
with $\lim_{\ve'\to 0}\frac{\ve''}{\ve'}=0$.  Since $\delta <
\frac{1}{2}$, then $R(p, \delta)$ is bounded from below outside of
any small $\ve$-regular neighborhood of $\partial \Si$.  On the
other hand, since the geodesic curvature of $\partial \Si$ and the norm of the
second fundamental form are both bounded, then the same argument
shows that for some sufficiently small $\ve >0, R(p, \delta)$ is equal to
$d_\Si(p, \partial \Si)$, when $p \in \Si - \partial \Si$ and
$d_\Si(p, \partial \Si) < \ve$. This proves that the function $G$ is
bounded and is equal to $1$ in some
neighborhood of $\partial \Si$.

\begin{definition} Let
  $\delta \in (0,\frac{1}{2})$.  Then we define:
$$a_\delta = \sup_{z \in (\Si -\partial \Si)} \frac{d_\Sigma(z, \partial \Si)}{R(z, \delta)}
=\sup(G_{\de}).$$
\end{definition}

The next lemma and its proof correspond to Lemma~11 in~\cite{mr13}.

\begin{lemma} \label{lem10}
Let  $\delta^\prime \in(0,
\frac{1}{2})$.  If $a_{\delta^\prime} < c, \;c \in [2, \infty)$,
then Theorem~\ref{thm1.1} holds for $\Si$ with $\delta_1 =
\frac{\delta^\prime}{c}$.
\end{lemma}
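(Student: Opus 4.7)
The plan is to convert the hypothesis $a_{\delta'} < c$ into a pointwise lower bound on $R(x, \delta')$ and then read the desired conclusion directly off the definition of being $\delta'$ weakly chord arc, applied at a cleverly chosen radius. Set $\delta_1 = \delta'/c$ and fix any intrinsic closed ball $\ov{B}_\Si(x, R) \subset \Si - \partial \Si$; in particular $d_\Si(x, \partial \Si) \geq R$.

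From the hypothesis and the definition of $a_{\delta'}$ applied at $z = x$, one obtains
\[
R(x, \delta') \;>\; \frac{d_\Si(x, \partial \Si)}{c} \;\geq\; \frac{R}{c}.
\]
Because $R(x, \delta')$ is a supremum, there exists $R^* \in (R/c, R(x, \delta'))$ for which $\ov{B}_\Si(x, R^*)$ is $\delta'$ weakly chord arc. A direct inspection of the definition shows that any subball $\ov{B}_\Si(x, R_1) \subset \ov{B}_\Si(x, R^*)$ inherits the two defining properties (conditions on all $s \in (0, R_1]$ and $s \in (0, R_1)$ follow from the corresponding conditions for $R^*$), so $\ov{B}_\Si(x, R/c)$ is itself $\delta'$ weakly chord arc.

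Applying the weakly chord arc property to $\ov{B}_\Si(x, R/c)$ at the maximal scale $s = R/c$ yields that $\Si(x, \delta' \cdot R/c) = \Si(x, \delta_1 R)$ is a disk, and
\[
\Si(x, \delta_1 R) \;\subset\; \ov{B}_\Si\!\left(x, \tfrac{R}{2c}\right) \;\subset\; B_\Si\!\left(x, \tfrac{R}{2}\right),
\]
where the second inclusion uses $c \geq 2$, so that $R/(2c) \leq R/4 < R/2$. This is item~2 of Theorem~\ref{thm1.1} with $\delta_1 = \delta'/c$. For item~1, the containment $\partial \Sigma(x, \delta_1 R) \subset \partial \B(x, \delta_1 R)$ with piecewise smooth boundary follows from the analytic-triangulation argument given at the end of the proof of Proposition~\ref{cm2.1}: the $H$-disk $\Si$ is analytic, the sphere $\partial \B(x, \delta_1 R)$ is analytic, the inclusion $\ov{B}_\Si(x, R) \subset \Si - \partial \Si$ excludes any boundary contribution from $\partial \Si$, and the mean curvature comparison principle rules out isolated intersection points.

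The only conceptual step is producing the inequality $R(x, \delta') > R/c$; everything else is bookkeeping with the definitions. The two points requiring mild care are the open/closed distinction between $\ov{B}_\Si(x, R/(2c))$ and $B_\Si(x, R/2)$, resolved by the strict inequality $c \geq 2$, and the sup-versus-attained-value issue in the definition of $R(x, \delta')$, resolved by the strict inequality $a_{\delta'} < c$. I do not expect any genuine obstacle.
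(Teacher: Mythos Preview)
Your proof is correct and follows essentially the same route as the paper's: derive $R(x,\delta') > R/c$ from the bound on $a_{\delta'}$, then read off the conclusions of the $\delta'$ weakly chord arc property at scale $s = R/c$. You are somewhat more explicit than the paper about the sup-versus-attained issue (your $R^*$), the passage from the closed ball $\ov{B}_\Si(x,R/(2c))$ to the open ball $B_\Si(x,R/2)$ via $c\geq 2$, and the piecewise-smooth boundary assertion in item~1, but these are minor bookkeeping points and the core argument is identical.
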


\begin{proof} Suppose that for some $R>0$ and
$x\in \Si$, $\ov{B}_\Si(x,R) \subset \Si - \partial
\Si$. By definition of $a_{\delta'},$
$$ a_{\delta^\prime} \geq
\frac{d_\Sigma(x, \partial \Si)}{R(x, \delta^\prime)} >
\frac{R}{R(x, \delta^\prime)},$$ which implies that $R(x,
\delta^\prime)> \frac{R}{c}$.  Since  $R(x,
\delta^\prime)> \frac{R}{c}$, the definition of
$R(x,
\delta^\prime)$ implies that $\Si(x, \delta^\prime \frac{R}{c})$ is a disk and
$$
 \Si(x, \delta^\prime \frac{R}{c})\subset \ov{B}_\Si(x,
\frac{1}{2}\cdot  \frac{R}{c}) .$$  Thus,
since  $\displaystyle \Si(x, \frac{\delta^\prime }{c}R)
=\Si(x, \delta^\prime \frac{R}{c})$ and $\displaystyle \frac{R}{c}<R$,
we conclude that $\displaystyle \Si(x, \frac{\delta^\prime }{c}R)$ is a disk and
$$
 \Si(x, \frac{\delta^\prime }{c}R)  \subset \ov{B}_\Si(x,
\frac{1}{2}R),$$
and so, Theorem~\ref{thm1.1}
holds for $\Si$ with $\delta_1 = \frac{\delta^\prime}{c}$. \end{proof}

\subsection {Locating the smallest scale which is
 not $\delta$ weakly chord arc.}

The proof of the next lemma uses a standard technique for finding a
smallest scale  for which some property holds on a
surface. The property we are considering here is that of being
$\delta$ weakly chord arc. In this case we take the proof directly
from the proof of the similar Lemma~3.39 in~\cite{cm35}.

\begin{lemma} \label{cm3.39} Let  $\delta \in
  (0,\frac{1}{2})$.  Then there exists a point $y \in \Si$
and a number $R_1 > 0$ such that:
\begin{enumerate}[1.]
\item $a_\delta R_1 < \frac{1}{2} d_\Si(y, \partial \Si)$.
\item $R(x, \delta) > R_1$ for every $x \in \ov{B}_\Si(y, a_\delta R_1)$.
\item $\ov{B}_\Si(y, 5R_1)$ is \underline{not} $\delta$ weakly chord arc.
\end{enumerate}
\end{lemma}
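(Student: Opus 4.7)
The plan is to reduce all three conclusions to a single inequality on the ratio $G_{\delta}(y) := d_{\Si}(y,\partial\Si)/R(y,\delta)$ and then exploit the supremum definition of $a_{\delta}$. Unpacking the definitions: condition~3 is equivalent to $5R_1 > R(y,\delta)$ (since $\ov{B}_{\Si}(y,5R_1)$ fails to be $\delta$ weakly chord arc exactly when $5R_1$ exceeds $R(y,\delta)$), so it amounts to $R_1 > R(y,\delta)/5$; condition~1 is just $R_1 < d_{\Si}(y,\partial\Si)/(2a_{\delta})$. An $R_1$ simultaneously meeting both constraints exists iff
\[
\frac{R(y,\delta)}{5} \;<\; \frac{d_{\Si}(y,\partial\Si)}{2\, a_{\delta}}, \qquad \text{equivalently}\qquad G_{\delta}(y) \;>\; \tfrac{2}{5}\, a_{\delta}.
\]

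Because $R(x,\delta)\leq d_{\Si}(x,\partial\Si)$ for every $x\in\Si-\partial\Si$, we have $G_{\delta}\geq 1$ and hence $a_{\delta}\geq 1>0$. Since $a_{\delta}=\sup_{z}G_{\delta}(z)$, I would simply choose a point $y\in\Si-\partial\Si$ with $G_{\delta}(y)>\tfrac{2}{5}\,a_{\delta}$, then pick any $R_1$ in the now-nonempty open interval $\bigl(R(y,\delta)/5,\; d_{\Si}(y,\partial\Si)/(2a_{\delta})\bigr)$. This selection immediately secures conditions~1 and~3.

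To verify condition~2, I would combine the intrinsic triangle inequality with the strict inequality condition~1. For any $x\in\ov{B}_{\Si}(y,a_{\delta}R_1)$,
\[
d_{\Si}(x,\partial\Si) \;\geq\; d_{\Si}(y,\partial\Si) - a_{\delta}R_1 \;>\; \tfrac{1}{2}\,d_{\Si}(y,\partial\Si),
\]
where condition~1 was used in the last step. Since $G_{\delta}(x)\leq a_{\delta}$, this forces
\[
R(x,\delta) \;\geq\; \frac{d_{\Si}(x,\partial\Si)}{a_{\delta}} \;>\; \frac{d_{\Si}(y,\partial\Si)}{2\, a_{\delta}} \;>\; R_1,
\]
applying condition~1 a second time for the final inequality. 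This yields condition~2.

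The argument is essentially bookkeeping, and the only genuine content is the opening reduction to the ratio inequality $G_{\delta}(y)>\tfrac{2}{5}a_{\delta}$; the constants $1/2$ and $5$ in the lemma are tuned exactly so that the supremum definition of $a_{\delta}$ directly produces an admissible $y$. There is no serious analytic obstacle here, which is consistent with the paper's remark that the proof is taken essentially verbatim from Lemma~3.39 in~\cite{cm35}.
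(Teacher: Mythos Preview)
Your argument is correct and is essentially the paper's own proof: both select $y$ so that $G_{\delta}(y)$ is within a fixed fraction of $a_{\delta}$, choose $R_1$ comparable to $R(y,\delta)$, and verify the three items via the triangle inequality and the definitions of $R(\cdot,\delta)$ and $a_{\delta}$. The only cosmetic differences are that the paper uses the threshold $G_{\delta}(y)>a_{\delta}/2$ and the explicit value $R_1=R(y,\delta)/4$, whereas you use $G_{\delta}(y)>\tfrac{2}{5}a_{\delta}$ and pick $R_1$ from the resulting open interval; these choices are interchangeable.
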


\begin{proof} Recall the function $G_{\de}$ on $\Si \, -\, \partial \Si$ is defined by
$G_{\de}(x) = d_\Si (x, \partial \Si)/R(x, \delta)$ and extends to a
bounded function on $\Sigma$ which has a constant value $1$ near $\partial \Si$.
Thus, $a_\delta = \sup( G_{\de})$ is a finite number that is greater than or equal to 1. Choose $y$ to
be a point in $\Si-\partial\Si$ so that $G_{\de}(y)$ is greater than
$\displaystyle \frac{a_\delta}{2}$. Hence, if we define
$d_\partial = d_\Si(y, \partial \Si)$, then
$\displaystyle \frac{a_\delta}{2}< \frac{d_\partial}{R(y, \delta)}=G_{\de}(y)$,
or equivalently,
\begin{equation} \label{eq:2.13} a_{\de} R(y,\de)<2d_{\partial}.\end{equation}

Now choose $R_1 = R(y,\delta)/4$ and we will show this definition
of $R_1$ satisfies the statements in the lemma.  This value of $R_1$ and \eqref{eq:2.13}
give the inequality $a_\delta R_1 < \frac{1}{2} d_\partial,$ which
is statement 1 in the lemma.  By definition of $R_1$, $R(y,
\delta)=4R_1$ and by the definition of $R(y, \delta)$ as a supremum,
the ball $\ov{B}_\Si(y, 5 R_1)$ is not $\delta$ weakly chord arc, which
proves statement 3.

By statement 1, $a_\delta R_1 < \frac{1}{2} d_\partial$, and so,
$\ov{B}_\Si(y, a_\delta R_1) \subset \ov{B}_\Si(y, d_\partial /2)$.  So if we
check that statement 2 holds for points in $\ov{B}_{\Si}(y,
d_\partial/2)$, then statement 2 holds.  If $x \in \ov{B}_\Si(y,
d_\partial/2)$, then by the triangle inequality, $d_\partial/2 \leq
d_\Si(x, \partial \Si )$.  This inequality, the definition of $G_{\de}$
and the choice of $y$ give the inequalities
\[
\frac{d_\partial}{2R(x, \delta)}\leq \frac{d_\Si(x, \partial \Si )}{R(x, \delta)}
=G_{\de}(x) \leq a_\delta < 2G_{\de}(y) = \frac{2
d_\partial}{R (y, \delta)}.
\]
 Therefore, $R(x, \delta) >
R(y,\delta)/4=R_1$.  This completes the proof of statement 2 and the
lemma now follows.
\end{proof}

\subsection{The proof of
Theorem~\ref{thm1.1}.}
We now prove Theorem~\ref{thm1.1}.   By Lemma~\ref{lem10}, we just
need to prove that $a_\delta$ is bounded independently of $\Sigma$
for some fixed constant $\delta \in (0, \frac{1}{2})$.

Let $\delta=\delta_2$, where $\delta_2$ is given Proposition~\ref{cm2.1}.
We now prove $a_\delta$ is bounded from above by
 $C_b$, where $C_b$ is given in Proposition~\ref{propcm3.4}.
 Suppose there exists a $\Sigma$ with $a_\delta >
C_b$.

By the Lemma~\ref{cm3.39}, there exist a point $y \in \Si$ and an
$R_1$, such that:
\begin{enumerate}[1.]
\item $\ov{B}_\Si(y, a_\delta R_1) \subset \ov{B}_\Si(y, \frac{1}{2}d_\Sigma
(y, \partial \Si)).$
\item $R(x, \delta) > R_1$ for every $x \in \ov{B}_\Si(y, a_\delta R_1).$
\item $\ov{B}_\Si(y, 5R_1)$ is \underline{not} $\delta$ weakly chord arc.
\end{enumerate}

By definition of $R(x, \delta)$ and statement 2, we have that
$\ov{B}_\Si(x, R_1)$ is $\delta$ weakly chord arc for every $x \in
\ov{B}_\Si(y, C_b R_1) \subset \ov{B}_\Si(y, a_\delta R_1).$  But
Proposition~\ref{propcm3.4} implies that $\ov{B}_\Si(y, 5 R_1)$ is
$\delta$ weakly chord arc, contradicting statement 3 above. This
contradiction completes the proof  of Theorem~\ref{thm1.1}.

\section{Applications of  Theorem~\ref{thm1.1}.} \label{sec4}

The next result gives a useful intrinsic one-sided curvature
estimate; its proof uses Theorem~\ref{thm1.1} and
the extrinsic  one-sided curvature
estimate given in Theorem~\ref{th}.  This next result is also stated as Theorem~4.5
in~\cite{mt9}; in the case that $H=0$, the
next theorem follows from Corollary~0.8 in~\cite{cm35}.

\begin{theorem}[Intrinsic one-sided curvature estimate for $H$-disks] \label{TH}
There exist $\ve_I\in(0,\frac{1}{2})$
and $C_I \geq 2 \sqrt{2}$ such that for any $R>0$, the following holds.
Let $\cD$ be an $H$-disk such that $$\cD\cap \B(R)\cap\{x_3=0\}=\O $$
and $x\in \cD \cap \B(\ve_I R)$, where $d_\cD(x,\partial \cD) \geq R$.  Then:
\begin{equation} \label{EQ1}
|A_{\cD}|(x)\leq \frac{C_I}{R}.
\end{equation} In particular,  $H< \frac{C_I}{R}$.
\end{theorem}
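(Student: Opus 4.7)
The plan is to combine the weak chord arc property (Theorem~\ref{thm1.1}) with the extrinsic one-sided curvature estimate (Theorem~\ref{th}). Since the intrinsic hypothesis $d_\cD(x,\partial \cD)\geq R$ gives $\ov{B}_\cD(x,R)\subset \cD-\partial \cD$, Theorem~\ref{thm1.1} applied at $x$ produces a sub-disk $\cD':=\cD(x,\delta_1 R)$ which is an $H$-disk with piecewise smooth boundary contained in $\partial\B(x,\delta_1 R)$, and with $\cD'\subset\ov{B}_\cD(x,R/2)$. Since $x\in\B(\ve_I R)\subset \B(R)$ and $\cD\cap\B(R)\cap\{x_3=0\}=\emptyset$, we have $x_3(x)\neq 0$, and after reflecting through $\{x_3=0\}$ if needed we may assume $x_3(x)>0$.

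First I would fix $\ve_I\in(0,\frac{1}{2})$ small enough that $\ve_I+\delta_1<1$, which guarantees $\cD'\subset \B(x,\delta_1 R)\subset \B(R)$. Since $\cD'$ is connected, passes through $x$, and $\cD\cap\B(R)\cap\{x_3=0\}=\emptyset$, it follows that $\cD'\subset\{x_3>0\}$. Let $p:=(x_1(x),x_2(x),0)$ denote the foot of the perpendicular from $x$ to $\{x_3=0\}$, and set $R':=(\delta_1-\ve_I)R/2$. Then $|x-p|=x_3(x)\leq|x|\leq \ve_I R$ and $|x-p|+R'<\delta_1 R$, so $\ov{\B}(p,R')\subset \B(x,\delta_1 R)$; in particular, the boundary $\partial\cD'\subset\partial\B(x,\delta_1 R)$ is disjoint from $\B(p,R')$.

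Translating by $-p$ (which preserves the plane $\{x_3=0\}$ since $p_3=0$), the translated disk $\cD'-p$ satisfies the hypotheses of Theorem~\ref{th} at scale $R'$: it misses $\{x_3=0\}$ in $\B(R')$ and its boundary is disjoint from $\B(R')$ entirely. Next I would further shrink $\ve_I$, if necessary, so that $\ve_I<\delta_1\ve/(2+\ve)$, where $\ve$ is the constant from Theorem~\ref{th}; this ensures $|x-p|=x_3(x)<\ve R'$, so $x-p\in (\cD'-p)\cap \B(\ve R')\cap \{x_3>0\}$. Theorem~\ref{th} then yields $|A_\cD|(x)=|A_{\cD'}|(x)\leq C/R'$, and setting $C_I:=\max\{2C/(\delta_1-\ve_I),\,2\sqrt{2}\}$ gives the estimate~(\ref{EQ1}). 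The mean curvature bound follows from $|A|^2\geq 2H^2$, so $H\leq|A|(x)/\sqrt{2}\leq C_I/(\sqrt{2}\,R)<C_I/R$. No substantial difficulty arises beyond the bookkeeping of constants, since Theorems~\ref{thm1.1} and~\ref{th} do all the geometric work; the only care required is to choose $\ve_I$ simultaneously small enough to satisfy $\ve_I+\delta_1<1$ and $\ve_I<\delta_1\ve/(2+\ve)$, which is trivially possible.
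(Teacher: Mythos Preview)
Your proof is correct and follows essentially the same two-step strategy as the paper: apply Theorem~\ref{thm1.1} to produce the sub-$H$-disk $\cD'=\cD(x,\delta_1 R)$ with $\partial\cD'\subset\partial\B(x,\delta_1 R)$, and then feed $\cD'$ into the extrinsic one-sided estimate Theorem~\ref{th}. The only difference is cosmetic: the paper keeps the ball centered at the origin (where the hypothesis $\cD\cap\B(R)\cap\{x_3=0\}=\emptyset$ is given) and observes that $\partial\cD'\subset\rth-\B(\delta_1 R/2)$, then rescales by $2/\delta_1$ and applies Theorem~\ref{th} directly at scale $R$, yielding the explicit constants $\ve_I=\delta_1\ve$ and $C_I=2C/\delta_1$; you instead translate to the foot $p$ of the perpendicular from $x$, which forces the extra (easy) check that $\cD'\subset\{x_3>0\}$ and leads to slightly messier constants. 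Either way the geometric content is identical.
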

\begin{proof}
Let $\ve$, $C$ be the constants given in Theorem~\ref{th}
and let $\de_1$ be the constant
given in Theorem~\ref{thm1.1}. We next check that the
constants $\displaystyle \ve_I={\de_1 \ve}$
and $\displaystyle C_I=\frac{2C}{\de_1}$ satisfy the
conditions in the theorem.

Without loss of generality, we may assume
that  $x\in  \cD \cap \B(\ve_I R)\cap \{x_3>0\}$, where $d_\cD(x,\partial \cD) \geq R$.
By Theorem~\ref{thm1.1}, the surface $\cD':=\Sigma(x,\de_1 R)\subset \cD$ is
an $H$-disk with its boundary in $\partial \B(x,\de_1 R)$.
Since $\ve\in (0,1/2)$, then $d_{\rth} (x,\vec{0})< \ve \de_1R<\frac{\de_1}{2} R$, and so
the triangle inequality implies
$\B(\frac{\de_1}{2} R)\cap \partial \B(x,\de_1 R)=\O$.
Hence, $\Sigma(x,\de_1 R)$ must have its boundary
in $\rth-\B(\frac{\de_1}{2} R)$.
Therefore, the scaled surface $\frac{2}{\de_1}\cD'$
satisfies the conditions of the disk  described
in Theorem~\ref{th}; in other words,
$$(\frac{2}{\de_1}\cD')\cap \B(R)\cap\{x_3=0\}
=\O \quad \mbox{and} \quad \partial (\frac{2}{\de_1}\cD')\cap \B(R)\cap\{x_3>0\}=\O.$$
Since the scaled point
$\frac{2}{\de_1}x \in (\frac{2}{\de_1}\cD')\cap \B(\ve)\cap \{x_3>0\}$,
Theorem~\ref{th} gives
$|A_{\frac{2}{\de_1}\cD'}|(\frac{2}{\de_1}x)\leq \frac{C}{R}$, and so
$$|A_{\cD'}|(x)\leq \frac{2}{\de_1}\frac{C}{R}=\frac{C_I}{R},$$
which completes the proof of the theorem.
\end{proof}

The following result is a direct consequence of some of the
arguments in the proof of Theorem~\ref{thm1.1}.
\begin{theorem} \label{cor:appl}
Given $\ve>0$ and $m\in (0,\infty)$, there exists $R(m,\ve)>m$
such that the following holds. Let $\Sigma$ be a complete
$H$-surface with boundary such that for any $x\in \Sigma$,
\[
\inj_\Sigma(x)\geq \min\{\ve, d_\Sigma(x,\partial \Sigma)\}.
\]
If $\ov{B}_\Sigma(y,R) \subset \Sigma -\partial \Sigma$ with $R\geq R (m,\ve)$, then
\[
\Sigma(y, m) \subset \ov{B}_\Sigma(y,\frac{R}{2}).
\]
\end{theorem}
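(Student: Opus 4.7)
The plan is to argue by contradiction and run a blow-up argument closely paralleling the second half of the proof of Proposition~\ref{propcm3.4}. Suppose the theorem fails; then for some fixed $\ve, m > 0$ I obtain a sequence of complete $H_n$-surfaces $\Sigma_n$ satisfying the injectivity radius hypothesis, points $y_n \in \Sigma_n$ with $\ov{B}_{\Sigma_n}(y_n, R_n) \subset \Sigma_n - \partial \Sigma_n$ and $R_n \to \infty$, and witnesses $z_n \in \Sigma_n(y_n, m)$ with $d_{\Sigma_n}(y_n, z_n) > R_n/2$. After translating $y_n$ to $\vec 0$, I fix a continuous path $\gamma_n \colon [0, L_n] \to \Sigma_n(y_n, m) \cap \ov{\B}(m)$ from $y_n$ to $z_n$. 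The function $t \mapsto d_{\Sigma_n}(y_n, \gamma_n(t))$ is continuous, equals $0$ at $t = 0$, and exceeds $R_n/2$ at $t = L_n$, so the intermediate value theorem produces points $q_n^j = \gamma_n(t_j) \in \gamma_n \cap \ov{\B}(m)$ with $d_{\Sigma_n}(y_n, q_n^j) = 2j\ve$, for $j = 0, 1, \ldots, \lfloor R_n/(4\ve) \rfloor$. By the reverse triangle inequality these points have pairwise intrinsic distance $\ge 2\ve$; and since $d_{\Sigma_n}(q_n^j, \partial\Sigma_n) \ge R_n/2 \ge \ve$ for $n$ large, the hypothesis gives $\inj_{\Sigma_n}(q_n^j) \ge \ve$, so each intrinsic ball $\ov{B}_{\Sigma_n}(q_n^j, \ve/2)$ is an embedded $H_n$-disk and these disks are pairwise disjoint.

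Setting $S_n = \{q_n^j\}$, a diagonal pigeonhole argument on the compact set $\ov{\B}(m)$ as in the proof of Proposition~\ref{propcm3.4} produces (after a subsequence) a point $q \in \ov{\B}(m)$ such that $\B(q, \tfrac{1}{n}) \cap S_n$ contains three distinct points $\wt z_1(n), \wt z_2(n), \wt z_3(n)$ for every $n$; the corresponding three intrinsic $\ve/2$-disks are pairwise disjoint embedded $H_n$-disks accumulating extrinsically at $q$. From here the argument reprises the second half of the proof of Proposition~\ref{propcm3.4}: after translating $q$ to the origin and an appropriate rescaling, Corollary~\ref{cest} supplies a uniform bound on the second fundamental forms of the three disks in a fixed neighborhood of $q$; a prolongation argument then yields a complete almost-embedded $H$-immersion $f\colon F \looparrowright \rth$ of bounded second fundamental form, to which two of the sequences of disks converge with multiplicity at least two. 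Corollary~2.5 in~\cite{mt3} makes $f$ proper, and the Jacobi-field construction from Claim~\ref{claim2.9} forces the universal cover of $f(F)$ to be stable, hence $f(F)$ is a plane $P$ meeting every neighborhood of $q$. Smooth convergence of $\Sigma_n$ to $P$ on compact sets near $q$ then forces the pairwise intrinsic distances in $\Sigma_n$ between $\wt z_1(n), \wt z_2(n), \wt z_3(n)$ to become comparable to their Euclidean distances (which tend to $0$), contradicting the lower bound $2\ve$ and completing the proof.

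The main obstacle will be ensuring that the three concentrating intrinsic $\ve/2$-disks satisfy the hypotheses of Corollary~\ref{cest} after the rescaling: in Proposition~\ref{propcm3.4} the analogous disks are extrinsic-ball components and have controlled extrinsic boundary data by construction, whereas an intrinsic ball's extrinsic extent is not controlled a priori by the injectivity radius alone. Resolving this requires adapting the rescaling step of Proposition~\ref{propcm3.4} to arrange that in the rescaled picture each intrinsic disk contains a definite extrinsic neighborhood of its center and its intrinsic boundary sits at extrinsic distance exceeding $1$ from $q$. Modulo this adaptation, the remainder is a direct reprise of the blow-up, Harnack/Jacobi-field, and stable-limit analysis of Proposition~\ref{propcm3.4}, and the constant $R(m, \ve)$ emerges from negating the assumed contradictory sequence.
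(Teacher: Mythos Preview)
Your overall architecture matches the paper's: contradict, find many intrinsically separated points on a path in $\Sigma(n)(y_n,m)$, pigeonhole three of them near some $q$, apply Corollary~\ref{cest}, run the prolongation/stability argument of Proposition~\ref{propcm3.4}, and contradict planarity. But the step you flag as ``the main obstacle'' is a genuine gap, and your proposed fix (``adapting the rescaling step'') cannot work: a homothety of $\rth$ scales intrinsic and extrinsic distances by the same factor, so it cannot manufacture the condition $\partial\Sigma_i\subset\rth-\B(1)$ that Corollary~\ref{cest} requires. An intrinsic ball $\ov{B}_{\Sigma_n}(q_n^j,\ve/2)$ can have its boundary arbitrarily close (extrinsically) to its center, and nothing in your setup rules this out.

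The paper resolves exactly this point by invoking Theorem~\ref{thm1.1}. The injectivity radius hypothesis makes $\ov{B}_{\Sigma(n)}(z,\ve)$ an $H_n$-disk for each $z\in S_n$; applying Theorem~\ref{thm1.1} to that disk yields that the \emph{extrinsic} object $\Sigma(n)(z,\delta_1\ve)$ is a disk with $\partial\Sigma(n)(z,\delta_1\ve)\subset\partial\B(z,\delta_1\ve)$ and $\Sigma(n)(z,\delta_1\ve)\subset\ov{B}_{\Sigma(n)}(z,\ve/2)$. These extrinsic disks, not the intrinsic balls, are what one feeds into Corollary~\ref{cest} and into the prolongation. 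So the missing ingredient is precisely the main theorem of the paper.

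Two smaller issues. First, for the prolongation and Claim~\ref{claim2.9} you need the intrinsic balls $\ov{B}_{\Sigma_n}(\wt z_1(n),r)$ and $\ov{B}_{\Sigma_n}(\wt z_2(n),r)$ to stay disjoint for $r\to\infty$; your points are only guaranteed to be $2\ve$ apart, which is not enough. The paper chooses $S_n$ so that the pairwise intrinsic distances tend to infinity (easy to arrange from your construction by subsampling, but it must be said). Second, your final contradiction (``intrinsic distances become comparable to Euclidean distances'') is not quite right: the three points lie on \emph{different} sheets converging to the plane $P$, so their mutual intrinsic distance does not collapse. The correct contradiction, as in Proposition~\ref{propcm3.4}, is that a large intrinsic ball around $\wt z_1(n)$ is nearly a flat disk whose boundary leaves $\oB(m)$, so $\wt z_1(n)$ and $\wt z_2(n)$ cannot be joined by the path $\gamma_n\subset\oB(m)$.
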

\begin{proof}
Arguing by contradiction, suppose there exist $\ve>0$ and $m\in (0,\infty)$
 such that for any $n> 1$, there exists a compact $H_n$-surface $\S(n)$ with
\[
\inj_{\S(n)}(x)\geq \min\{\ve, d_{\S(n)}(x,\partial \S(n))\},
\]
and $y_n\in \S(n)$ such that  $\ov{B}_\Sigma(y_n,n)\subset \Sigma -\partial \Sigma$ but
\[
\S(n)(y_n, m) \not \subset \ov{B}_{\S(n)}(y_n,\frac{n}{2}).
\]

We now follow the arguments in the proof of
Proposition~\ref{propcm3.4} to give a sketch of the proof,
leaving the details to the reader. Without loss of generality, after rescaling
by $\frac 1m$ and normalizing by translations, we can
assume that $m=1$ and that $y_n=\vec 0$.
Also we may assume that $\ve\in(0,1)$, since if the
theorem holds for a smaller positive choice of $\ve$,
then it holds for the original choice.

Since
$\S(n) (\vec 0,1)$ is path connected, we can find an embedded  path $\gamma _n
\subset\S(n) (\vec 0,1)$ starting at $\vec 0$ and ending at some
point of $\partial B_{\S(n)}(\vec 0,\frac{n}{2})$.
As $n$ goes to infinity, there exists
a subset $S_n=\{ z_n(1),\ldots
,z_n(k(n))\} \subset \gamma _n$ with
\begin{enumerate}[1.]
\item $\lim_{n\to\infty} k(n)=\infty$.
\item The intrinsic distance in $\S(n)$ between any two of the
points of $S_n$ tends to infinity as $n$ goes to infinity.
\end{enumerate}

The hypothesis $\inj_\Sigma(x)\geq \min\{\ve, d_\Sigma(x,\partial \Sigma)\}$
implies that if $x\in \S(n)$ with
$d_{\S(n)}(x,\partial \S(n))>\ve$, then
the geodesic ball $\ov{B}_{\S(n)} (x, \ve)$ is a disk.
Thus, Theorem~\ref{thm1.1} gives that
${\S(n)} (x,\delta_1 \ve)$ is a compact disk
with piecewise smooth boundary in $\partial \B(x, \delta_1\ve)$ and
$\S(n) (x,\delta_1 \ve)\subset \ov{B}_{\S(n)}(x,\ve/2)$.
In particular this is true for any $z\in S_n$. Recall that the number of disks
${\S(n)} (z,\delta_1\ve)$ centered at points
$z $ in $S_n$ diverges as $n \rightarrow
\infty$. Hence, after  reindexing and
replacing by a subsequence, there is a point
$q\in \ov\B(1)$, where the number
of points in $\B(q, \frac{1}{n}) \cap
S_n$ goes to infinity as $n$ goes to infinity.
In particular, we may assume that for each $n$, there exist
three distinct
points ${z}_1(n),{z}_2(n),{z}_3(n)$ in $\B(q, \frac{1}{n}) \cap
{S}_n$.  Since the
intrinsic distances between any two of
these points is diverging to infinity and
${{\S(n)}}({z}_i( n),\delta_1\ve) \subset \ov{B}_{\S(n)} ({z}_i(n), \ve/2)$,  for  $i\in \{1,2,3\}$,
then the disks $\{{\Sigma}(n)({z}_i( n),\delta_1\ve)\mid i=1,2,3\}$
form a pairwise disjoint collection.
Arguing similarly as in the prolongation argument in the proof of Proposition~\ref{propcm3.4},
in the limit we obtain a complete  stable minimal surface $F$ (which is a plane),
which can be used to prove that the points
${z}_1(n),{z}_2(n),{z}_3(n)$ can not be contained
in the embedded arc $\gamma_n\subset \oB(1)$. This gives a
contradiction and completes sketch of the proof of the theorem.
\end{proof}

\begin{remark}  {\em
Theorem~\ref{cor:appl} can be improved in various ways.
For example, Theorem~\ref{thm1.1} implies that
the hypothesis \[
\inj_\Sigma(x)\geq \min\{\ve, d_\Sigma(x,\partial \Sigma)\}.
\]
can be replace by the weaker condition that there exists an
$\ve>0$ such that for all $x\in \Sigma$
such that $d_\Sigma(x,\partial \Sigma)>\ve$, the intrinsic
ball $\ov{B}_{\S}(x,\ve)$ is contained in a simply-connected subdomain
of $\Sigma$.
}
\end{remark}

\vspace{.3cm}
\center{William H. Meeks, III at profmeeks@gmail.com\\
Mathematics Department, University of Massachusetts, Amherst, MA 01003}
\center{Giuseppe Tinaglia at giuseppe.tinaglia@kcl.ac.uk\\ Department of
Mathematics, King's College London,
London, WC2R 2LS, U.K.}

\bibliographystyle{plain}
\bibliography{bill}

\ed